\title{Galois Theory for Inverse Semigroup Orthogonal Actions}
\author[Lautenschlaeger and Tamusiunas]{Wesley G. Lautenschlaeger and Thaísa Tamusiunas}
\address{Instituto de Matem\'{a}tica, Universidade Federal do Rio Grande do Sul,  Av. Bento Gon\c{c}alves, 9500, 91509-900. Porto Alegre-RS, Brazil}
\email{wesleyglautenschlager@gmail.com, thaisa.tamusiunas@gmail.com}
\date{}
\newcounter{contador}
\numberwithin{contador}{section}
\newtheorem{theorem}[contador]{Theorem}
\newtheorem{prop}[contador]{Proposition}
\newtheorem{lemma}[contador]{Lemma}
\newtheorem{corollary}[contador]{Corollary}
\theoremstyle{definition}
\newtheorem{defi}[contador]{Definition}
\newtheorem{obs}[contador]{Remark}
\newtheorem{exe}[contador]{Example}
\begin{document}

\maketitle

\begin{abstract}
    A Galois correspondence theorem is proved for the case of inverse semigroups acting orthogonally on commutative rings as a consequence of the Galois correspondence theorem for groupoid actions which appears in \cite{paques2018galois}. To this end, we use a classic result of inverse semigroup theory that establishes a one-to-one correspondence between inverse semigroups and inductive groupoids.
\end{abstract}

\vspace{0.5 cm}

\noindent \textbf{2010 AMS Subject Classification:} Primary 20M18. Secondary 06F05.

\noindent \textbf{Keywords:} inverse semigroup, Galois theory, inverse semigroup action, inverse semigroup Galois theory, inductive groupoid, ordered groupoid.

\section{Introduction}

S. U. Chase, D. K. Harrison and A. Rosenberg developed in 1965 a Galois theory for finite groups acting on commutative ring extensions, in which they exhibited a generalization of the Fundamental Theorem of Galois Theory \cite{chase1969galois}. The concepts of Galois extension and the Galois correspondence in \cite{chase1969galois} were extended to actions of groupoids in \cite{bagio2012partial} by D. Bagio and A. Paques and in \cite{paques2018galois} by A. Paques and the second author. In \cite{paques2018galois}, if $A|_{A^{\beta}}$ is a commutative groupoid Galois extension with finite groupoid $G$, it was proven that there is a one-to-one correspondence between the wide subgroupoids of $G$ and the subalgebras of $A$ that are separable and $\beta$-strong, where $\beta$ is an action of $G$ on $A$.

In inverse semigroup theory, it is well known that each inverse semigroup is associated, up to isomorphism, to one, and only one, inductive groupoid via the Ehresmann-Schein-Nambooripad Theorem (hereafter, “ESN Theorem”) \cite[Theorem 4.1.8]{lawson1998inverse}. So the purpose of this paper is to use this connection to construct a Galois theory for inverse semigroups acting on commutative rings, based on what has already been done for groupoids. Let $S$ be an inverse semigroup acting on an $R$-algebra $A$ via an action $\beta = (\{E_s\}_{s \in S}, \{ \beta_s : E_{s^{-1}} \rightarrow E_s\}_{s \in S})$. If $A = \bigoplus_{e \in E(S)} E_e$, where $E(S)$ is the set of idempotent elements of $S$, we say that $\beta$ is an \emph{orthogonal} inverse semigroup action. In particular, every group action is orthogonal. We will prove that, given a commutative inverse semigroup Galois extension $A|_{A^{\beta}}$ with finite inverse semigroup $S$, there is a one-to-one correspondence between full inverse subsemigroups of $S$ and separable and $\beta$-strong subalgebras of $A$, where $\beta$ is an orthogonal action of $S$ on $A$. When $S$ is in particular a group, the results that appear in our work coincide with those constructed by Chase, Harrison and Rosenberg in \cite{chase1969galois}.

The paper is organized as follows. In Section 2 we start by fixing notations and recalling for the reader the connection between inverse semigroups and inductive groupoids. In Section 3 we study the connection between the skew inverse semigroup ring and the skew groupoid ring and we define orthogonal inverse semigroup actions. We explore more about the relation between inverse semigroups and inductive groupoids in the respective substructures in Section 4, in which we also analyse the normal inverse semigroups and quotients. In Section 5 we develop a Galois theory, in which we prove a Galois correspondence theorem, that extends the one given in \cite{chase1969galois}. To illustrate the correspondence, we finish the paper constructing an example of a Galois extension of an inverse semigroup of order 28 (which is an inverse subsemigroup of $\mathcal{I}_s(X)$, $X = \{1, 2, 3\}$, of order 34) acting on an algebra generated by six idempotents elements, where we compute all the full inverse semigroups and the corresponding subalgebras.

Throughout, rings and algebras are associative and unital.

\section{Preliminaries}












We will remind the reader of some inverse semigroup and groupoid theory results. We are interested in analysing the relation between inverse semigroups and ordered groupoids. 

Let $G$ be a groupoid and denote by $G_0$ the identities of $G$. Given $g \in G$, the \emph{domain} and the \emph{range} of $g$ will be denoted by $d(g)$ and $r(g)$, respectively. Hence, $d(g) = g^{-1}g$ and $r(g) = gg^{-1}$. For all $g, h \in G$, we write $\exists gh$ whenever the product $gh$ is defined. 

We say that $G$ is \emph{ordered} if there is a partial order $\leq$ in $G$ such that: \begin{itemize}

\item [(OG1)] If $x \leq y$ then $x^{-1} \leq y^{-1}$;

\item [(OG2)] For all $x, y, u, v \in G$ such that $x \leq y$, $u \leq v$, $\exists xu$ and $\exists yv$, we have $xu \leq yv$;

\item [(OG3)] Given $x \in G$ and $e \in G_0$ such that $e \leq d(x)$, there is a unique element $(x | e) \in G$ which satisfies $(x | e) \leq x$ and $d(x | e) = e$.

\item [(OG3*)] Given $x \in G$ and $e \in G_0$ such that $e \leq r(x)$, there is a unique element $(e | x) \in G$ which satisfies $(e | x) \leq x$ and $r(e | x) = e$. \end{itemize}

If $G$ is an ordered groupoid such that $G_0$ is a meet semilattice with respect to $\leq$, we say that $G$ is an \emph{inductive} groupoid.

If $G$ is an ordered groupoid, we say that $H \subseteq G$ is an \emph{ordered subgroupoid} if $H$ is a subgroupoid and if $x \in H$ and $e \in H_0$ are such that $e \leq d(x)$, then $(x | e) \in H$.


\begin{prop} \cite{lawson1998inverse}
Let $(G, \leq)$ be an ordered groupoid. Then the set of identities $G_0$ is an order ideal of $G$.
\end{prop}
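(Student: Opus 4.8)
The plan is to verify directly that $G_0$ is a down-set for $\leq$: I will show that whenever $e \in G_0$ and $x \leq e$, the element $x$ must itself be an identity. Before doing this I would isolate a monotonicity property of the structure maps that is used throughout ordered groupoid theory, namely that $x \leq y$ forces $d(x) \leq d(y)$ and $r(x) \leq r(y)$.

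To prove this monotonicity, start from $x \leq y$ and apply (OG1) to obtain $x^{-1} \leq y^{-1}$. Since $x^{-1}x = d(x)$ and $y^{-1}y = d(y)$ are always defined, (OG2) applied to the pairs $x^{-1} \leq y^{-1}$ and $x \leq y$ gives $d(x) \leq d(y)$; symmetrically, feeding $x \leq y$ and $x^{-1} \leq y^{-1}$ into (OG2) via the defined products $xx^{-1}$ and $yy^{-1}$ yields $r(x) \leq r(y)$.

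Now suppose $x \leq e$ with $e \in G_0$. Because $e$ is an identity we have $d(e) = e$, so monotonicity gives $d(x) \leq d(e)$; in particular the identity $f := d(x)$ satisfies $f \leq d(e)$, so (OG3) applies to $e$ and $f$ and provides a \emph{unique} element below $e$ whose domain is $f$. I claim two elements fit this description. The first is $x$ itself: indeed $x \leq e$ and $d(x) = f$. The second is the identity $f = d(x)$ itself: it lies below $e$ because $d(x) \leq d(e) = e$, and its domain is $d(f) = f$ since $f$ is an identity. By the uniqueness clause of (OG3) these two elements coincide, so $x = d(x)$, which is exactly the statement that $x \in G_0$.

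The argument is short, and the only place demanding care is the monotonicity lemma, where one must correctly pair the inequalities and check that the relevant composites are defined before invoking (OG2). The conceptual step worth flagging is the final one: rather than computing with $x$ directly, I compare it against the candidate restriction $d(x)$ and let the uniqueness in (OG3) force the equality.
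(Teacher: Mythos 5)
Your proof is correct. The paper does not include an argument for this proposition (it is cited directly from Lawson's book), but your reasoning---deriving the monotonicity of $d$ and $r$ from (OG1) and (OG2), and then using the uniqueness clause of (OG3) to force $x = d(x)$ by exhibiting both $x$ and $d(x)$ as restrictions of $e$ to the identity $d(x)$---is precisely the standard argument for this fact, so there is nothing to fix or compare.
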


Let $S$ be an arbitrary inverse semigroup. We define in $S$ a partial binary operation $\cdot$, named the \emph{restricted product}, by
\begin{align*}
    \exists s \cdot t \Leftrightarrow s^{-1}s = tt^{-1} \text{ and in this case } s \cdot t = st.
\end{align*}

Also, define the relation $\leq$ on $S$ as follows:
\begin{align*}
    s \leq t \Leftrightarrow s = tf, \text{ for some } f \in E(S),
\end{align*}
where $E(S)$ is the set of idempotent elements of $S$.

By \cite[Chapter 3]{lawson1998inverse}, the relation $\leq$ defines a partial order on $S$, called the \emph{natural partial order on $S$}.

Denote $\mathbb{G}(S) := (S, \cdot, \leq)$, that is, the inverse semigroup $S$ equipped with the restricted product and the natural partial order. We say that $\mathbb{G}(S)$ is the \emph{groupoid associated to the inverse semigroup $S$}. One can show that $\mathbb{G}(S)$ is an inductive groupoid for all inverse semigroup $S$ just by taking $(x|e) = xe$ and $(e|x) = ex$.

Given an inductive groupoid $(G, \cdot, \leq)$, we define the \emph{pseudoproduct} $\star$ of $x, y \in G$ by
\begin{align*}
    x \star y = (x | e) \cdot (e | y),
\end{align*}
where $e = d(x) \wedge r(y)$ is the greatest lower bound of $d(x)$ and $r(y)$. That is well defined because $G_0$ is a meet semilattice.

Denote by $\mathbb{S}(G) := (G, \star)$.

\begin{prop} \cite[Proposition 4.1.7]{lawson1998inverse}
Let $G$ be an inductive groupoid.

(i) $\mathbb{S}(G)$ is an inverse semigroup;

(ii) $\mathbb{G}(\mathbb{S}(G)) = G$;

(iii) For any inverse semigroup $S$, $\mathbb{S}(\mathbb{G}(S)) = S$.
\end{prop}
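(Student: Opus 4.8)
The plan is to prove (i) first, since it carries the structural content, and then read off (ii) and (iii) as verifications that the two constructions are mutually inverse. For (i) I would use the classical characterisation that a semigroup is inverse precisely when it is regular and its idempotents commute. Thus the three things to establish are: that $\star$ is associative; that the groupoid inverse $x\mapsto x^{-1}$ furnishes a semigroup inverse, so that $\mathbb{S}(G)$ is regular; and that the idempotents of $(G,\star)$ are exactly $G_0$ and commute.

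The regularity and idempotent parts are short computations from the defining formula $x\star y=(x\mid e)\cdot(e\mid y)$ with $e=d(x)\wedge r(y)$. Since $r(x^{-1})=d(x)$, one gets $e=d(x)$ and hence $x\star x^{-1}=(x\mid d(x))\cdot(d(x)\mid x^{-1})=x\cdot x^{-1}=r(x)\in G_0$, and symmetrically $x^{-1}\star x=d(x)\in G_0$; feeding these back yields $x\star x^{-1}\star x=x$, so every element is regular with specified inverse $x^{-1}$. For $e,f\in G_0$ one has $e\wedge f\le d(e)=e$, so $(e\mid e\wedge f)=e\wedge f$ and likewise $(e\wedge f\mid f)=e\wedge f$, giving $e\star f=e\wedge f=f\star e$; this shows $G_0$ consists of commuting idempotents and identifies its $\star$-structure with the meet semilattice $(G_0,\wedge)$. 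The reverse inclusion, that every $\star$-idempotent lies in $G_0$, is a short groupoid argument: if $u\star u=u$, matching ranges of $u$ and $(u\mid e)$ through the uniqueness clause of (OG3$^*$) forces $e=d(u)$ and, symmetrically, $d(u)=r(u)$, after which $u\star u=u\cdot u$ reduces to $u\cdot u=u$ in the groupoid and hence $u=d(u)\in G_0$.

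Associativity of $\star$ is the genuine obstacle, and I would isolate it as a separate lemma. The strategy is to expand $(x\star y)\star z$ and $x\star(y\star z)$ via the restriction and corestriction operations and show that each collapses to the same element, a triple restricted product of $x,y,z$ taken over a common idempotent. To keep the bookkeeping manageable I would first record the elementary consequences of (OG1)--(OG3$^*$): that $(x\mid e)^{-1}=(e\mid x^{-1})$, that $g\le x$ iff $g=(x\mid d(g))$, that restriction is transitive, that $d(x\mid e)=e$ with $r(x\mid e)\le r(x)$, and that restriction is compatible with the partial product where the latter is defined. With these identities, together with (OG2) for order-compatibility of the partial product, the two bracketings can be rewritten over the same restricting idempotent and compared. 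Essentially all of the technical weight of the proposition sits here.

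Finally, (iii) is a direct computation in $\mathbb{S}(\mathbb{G}(S))$: the meet in $E(S)$ is the product of idempotents, so $e=s^{-1}s\,tt^{-1}$, and with $(s\mid e)=se$, $(e\mid t)=et$ the inverse-semigroup identities give $s\star t=(se)(et)=s\,(s^{-1}s\,tt^{-1})\,t=st$, whence $\mathbb{S}(\mathbb{G}(S))=S$. For (ii), working in $\mathbb{G}(\mathbb{S}(G))$, the semigroup inverse is the groupoid inverse (checked in (i)) and the derived maps $x\mapsto x^{-1}\star x$ and $x\mapsto x\star x^{-1}$ recover $d$ and $r$, so the restricted product induced by $\star$ agrees with the original $\cdot$; it remains to see that the natural partial order $x\le_{\star}y\Leftrightarrow x=y\star f$ with $f$ a $\star$-idempotent coincides with $\le$. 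Since the $\star$-idempotents are exactly $G_0$ and $y\star f$ is the restriction of $y$ by an identity below $d(y)$, this reduces to the characterisation $x\le y\Leftrightarrow x=(y\mid d(x))$ noted above, giving $\mathbb{G}(\mathbb{S}(G))=G$ as ordered groupoids.
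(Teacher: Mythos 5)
The paper does not prove this proposition at all: it is quoted from Lawson's book \cite[Proposition 4.1.7]{lawson1998inverse}, so the only meaningful comparison is with that standard proof, and your outline follows it faithfully. The parts you actually execute are correct: $x \star x^{-1} = r(x)$ and $x^{-1} \star x = d(x)$ give regularity; $e \star f = e \wedge f$ (using that $G_0$ is an order ideal, so $(e \mid e \wedge f) = e \wedge f$) shows the idempotents coming from $G_0$ commute; the uniqueness clauses of (OG3) and (OG3$^*$) correctly force every $\star$-idempotent into $G_0$; the computation $s \star t = (se)(et) = st$ gives (iii); and identifying the restricted product and the natural partial order of $\mathbb{S}(G)$ with the original $\cdot$ and $\leq$ (via the characterisation $x \leq y \Leftrightarrow x = (y \mid d(x))$) gives (ii). The one place where your text is a plan rather than a proof is exactly where you say the weight sits: associativity of the pseudoproduct. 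The identities you list --- $(x \mid e)^{-1} = (e \mid x^{-1})$, transitivity of restriction, compatibility of restriction with the partial product, $g \leq x$ iff $g = (x \mid d(g))$ --- are precisely the ingredients of Lawson's associativity lemma, and the claim that both bracketings collapse to a common triple restricted product is indeed how that argument runs, so the strategy is sound; but as written the associativity step is asserted rather than established, and a self-contained proof would have to carry out that bookkeeping (or simply cite Lawson, as the paper itself does).
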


In these cases the partial orders of the inverse semigroup and of the inductive groupoid are the same.

\begin{defi}
A functor $\varphi : G \to H$ between inductive groupoids is called an \emph{inductive functor} if $g_1 \leq_G g_2$ implides $\varphi(g_1) \leq_H \varphi(g_2)$ and for all $e,f \in G_0$, $\varphi(e \wedge f) = \varphi(e) \wedge \varphi(f)$.

A map $\psi : S \to T$ between inverse semigroups is called an \emph{inverse semigroup homomorphism} if $\psi(st) = \psi(s)\psi(t)$.
\end{defi}

The maps $\mathbb{S}$ and $\mathbb{G}$ can also be applied in the morphisms of inverse semigroups and inductive groupoids as follows: if $\psi : S \to T$ is an inverse semigroup homomorphism, then $\mathbb{G}(\psi) : \mathbb{G}(S) \to \mathbb{S}(T)$, where $\mathbb{G}(\psi)(g) = \psi(g)$ is an inductive functor. Reciprocally, if $\varphi : G \to H$ is an inductive functor, then $\mathbb{S}(\varphi) : \mathbb{S}(G) \to \mathbb{S}(H)$ is an inverse semigroup homomorphism, where $\mathbb{S}(\varphi)(s) = \varphi(s)$. So we can consider $\mathbb{S}$ and $\mathbb{G}$ functors. This yields the ESN Theorem.

\begin{theorem}[Ehresmann-Schein-Nambooripad] \cite[Theorem 4.1.8]{lawson1998inverse}
The category of inverse semigroups and homomorphisms is isomorphic to the category of inductive groupoids and inductive functors.
\end{theorem}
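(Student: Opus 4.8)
The plan is to verify that the two assignments $\mathbb{G}$ and $\mathbb{S}$, already described on objects and on morphisms, constitute a pair of mutually inverse functors between the category $\mathcal{IS}$ of inverse semigroups and homomorphisms and the category $\mathcal{IG}$ of inductive groupoids and inductive functors; this is exactly what an isomorphism of categories means. On objects, the required identities $\mathbb{G}(\mathbb{S}(G)) = G$ and $\mathbb{S}(\mathbb{G}(S)) = S$ are furnished by the preceding Proposition, so no further work is needed there. The entire content of the proof therefore lies in the behaviour on morphisms and in functoriality.

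First I would check that $\mathbb{G}$ is well defined on morphisms, i.e.\ that for an inverse semigroup homomorphism $\psi\colon S \to T$ the map $\mathbb{G}(\psi)$, given by the same underlying function, is an inductive functor. Since $\psi$ is multiplicative it preserves idempotents and inverses, so $s^{-1}s = tt^{-1}$ forces $\psi(s)^{-1}\psi(s) = \psi(t)\psi(t)^{-1}$; hence $\psi$ carries restricted products to restricted products and is a groupoid functor. Order preservation follows from the description $s \leq t \Leftrightarrow s = tf$ with $f \in E(S)$, since then $\psi(s) = \psi(t)\psi(f)$ with $\psi(f)$ idempotent. Finally, on the semilattice $E(S)$ the meet is the product, so $\psi(e \wedge f) = \psi(ef) = \psi(e)\psi(f) = \psi(e) \wedge \psi(f)$, and $\mathbb{G}(\psi)$ is inductive.

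Conversely, I would check that for an inductive functor $\varphi\colon G \to H$ the map $\mathbb{S}(\varphi)$ is an inverse semigroup homomorphism, i.e.\ that $\varphi(x \star y) = \varphi(x) \star \varphi(y)$. Writing $x \star y = (x|e)\cdot(e|y)$ with $e = d(x) \wedge r(y)$, the key lemma is that $\varphi$ commutes with the restriction operations, $\varphi(x|e) = (\varphi(x)\,|\,\varphi(e))$ and $\varphi(e|y) = (\varphi(e)\,|\,\varphi(y))$: indeed $(x|e) \leq x$ and $d(x|e) = e$ give, by order and domain preservation, $\varphi(x|e) \leq \varphi(x)$ and $d(\varphi(x|e)) = \varphi(e)$, and the uniqueness clause in (OG3) identifies this element with $(\varphi(x)\,|\,\varphi(e))$. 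Combining this with preservation of meets, which yields $\varphi(e) = d(\varphi(x)) \wedge r(\varphi(y))$, and with the fact that a functor preserves defined products, one obtains $\varphi(x \star y) = \varphi(x) \star \varphi(y)$. This compatibility of an inductive functor with the restriction operation, and hence with the pseudoproduct, is the main obstacle of the proof; the remaining steps are formal.

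It remains to assemble these facts into an isomorphism of categories. Because both $\mathbb{G}$ and $\mathbb{S}$ leave the underlying function of a morphism unchanged, they visibly send identity morphisms to identity morphisms and preserve composition, so both are functors; the same observation shows that $\mathbb{S}(\mathbb{G}(\psi)) = \psi$ and $\mathbb{G}(\mathbb{S}(\varphi)) = \varphi$ on morphisms. Together with the object level identities from the preceding Proposition, this proves $\mathbb{S} \circ \mathbb{G} = \mathrm{Id}_{\mathcal{IS}}$ and $\mathbb{G} \circ \mathbb{S} = \mathrm{Id}_{\mathcal{IG}}$, and therefore the two categories are isomorphic.
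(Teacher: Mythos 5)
Your proposal is correct and follows exactly the route the paper itself gestures at: the paper does not prove this theorem but cites Lawson after describing $\mathbb{S}$ and $\mathbb{G}$ on objects and morphisms and invoking the preceding Proposition for $\mathbb{G}(\mathbb{S}(G)) = G$ and $\mathbb{S}(\mathbb{G}(S)) = S$. Your write-up simply supplies the standard details of that same argument --- in particular the key lemma $\varphi(x|e) = (\varphi(x)\,|\,\varphi(e))$ via the uniqueness clause of (OG3), which is precisely how the cited proof in Lawson handles the pseudoproduct --- so there is nothing to correct.
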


\section{Actions of inverse semigroups and inductive actions of inductive groupoids}

We start this section remembering the definition of inverse semigroup action and some of its consequences. The reader can found more about it in \cite{exel2010actions}.

Let $S$ be an inverse semigroup and $A$ an $R$-algebra. We say that $\beta = (\{E_s\}_{s \in S}, \{ \beta_s : E_{s^{-1}} \rightarrow E_s\}_{s \in S})$ is an \emph{action of $S$ on $A$} if $E_s \triangleleft A$, $\beta_s$ is an isomorphism of $R$-algebras, for all $s \in S$, and
\begin{itemize}
\item[(i)] $\beta_s(\beta_t(a)) = \beta_{st}(a)$, for all $s,t \in S$ and $a \in E_{t^{-1}}$;

\item[(ii)] If $S$ is unitary with identity $1_S$, then $E_{1_S} = A$.
\end{itemize}

\begin{prop} \cite{exel2010actions} \label{propconsacoes} Let $\beta$ be an action of $S$ on $A$ and $s, t \in S$. Then:

\emph{(i)} $\beta_{1_S} = \text{Id}_A$;

\emph{(ii)} $\beta_{s^{-1}} = \beta_s^{-1}$;

\emph{(iii)} $\beta_s(E_t \cap E_{s^{-1}}) = E_{st}$;

\emph{(iv)} If $f \in E(S)$, then $\beta_f = Id_{E_f}$;

\emph{(v)} $E_{st} \subseteq E_s$.
\end{prop}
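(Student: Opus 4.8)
The plan is to derive all five items from the single composition axiom (i), reading it as the statement that the partial isomorphism $\beta_{st}$ coincides with the composite partial isomorphism $\beta_s \circ \beta_t$, so that the domains and ranges, and not merely the values on a common domain, agree. I would begin with (iv), as it is the cleanest and feeds the rest. For $f \in E(S)$ we have $f = f^{-1}$ and $f f = f$, so $\beta_f \colon E_f \to E_f$, and axiom (i) gives $\beta_f \circ \beta_f = \beta_{ff} = \beta_f$ as self-maps of $E_f$. Since $\beta_f$ is an isomorphism, hence injective, cancelling one factor yields $\beta_f = \mathrm{Id}_{E_f}$. Item (i) is then the special case $f = 1_S$: here $1_S \in E(S)$ and $E_{1_S} = A$ by the second axiom, so $\beta_{1_S} = \mathrm{Id}_{E_{1_S}} = \mathrm{Id}_A$.

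Next I would prove (ii). Since $(s^{-1})^{-1} = s$, the map $\beta_{s^{-1}} \colon E_s \to E_{s^{-1}}$ has range exactly the domain $E_{s^{-1}}$ of $\beta_s$, so the composite $\beta_s \circ \beta_{s^{-1}}$ is defined on all of $E_s$; by axiom (i) it equals $\beta_{ss^{-1}}$, which is $\mathrm{Id}_{E_{ss^{-1}}}$ by (iv). Comparing the two descriptions forces $E_{ss^{-1}} = E_s$ and $\beta_s \circ \beta_{s^{-1}} = \mathrm{Id}_{E_s}$; symmetrically $\beta_{s^{-1}} \circ \beta_s = \mathrm{Id}_{E_{s^{-1}}}$, whence $\beta_{s^{-1}} = \beta_s^{-1}$.

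The heart of the proposition is (iii), and this is where I expect the main difficulty, precisely in the bookkeeping of domains. By the definition of composition of partial maps, $\beta_s \circ \beta_t$ is the isomorphism whose domain is $\beta_t^{-1}(E_t \cap E_{s^{-1}})$ and whose range is $\beta_s(E_t \cap E_{s^{-1}})$, the latter because $\beta_t$ carries its restricted domain onto $E_t \cap E_{s^{-1}}$. Axiom (i), in the reading above, identifies this composite with $\beta_{st}$, whose range is by definition $E_{st}$; equating ranges gives $E_{st} = \beta_s(E_t \cap E_{s^{-1}})$, which is (iii). If one instead argues the two inclusions by hand, the inclusion $\beta_s(E_t \cap E_{s^{-1}}) \subseteq E_{st}$ is the easy direction: writing $a = \beta_t(b)$ with $b \in E_{t^{-1}}$ and $\beta_t(b) \in E_{s^{-1}}$ gives $\beta_s(a) = \beta_{st}(b) \in E_{st}$ by (i). The reverse inclusion is the delicate one, where a typical element $c = \beta_{st}(b) \in E_{st}$ must be pulled back through $\beta_s^{-1} = \beta_{s^{-1}}$, using (ii), into $E_t \cap E_{s^{-1}}$, and it is exactly here that the domain identities are indispensable.

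Finally, (v) is an immediate corollary of (iii): since $E_t \cap E_{s^{-1}} \subseteq E_{s^{-1}}$ and $\beta_s$ is an isomorphism of $E_{s^{-1}}$ onto $E_s$, we obtain $E_{st} = \beta_s(E_t \cap E_{s^{-1}}) \subseteq \beta_s(E_{s^{-1}}) = E_s$. Thus the only genuinely non-formal point is the careful treatment of the partial-map domains inside (iii); everything else is either the idempotent-squaring trick (for (iv) and (i)), a composition computation with (iv) (for (ii)), or a direct consequence of (iii) (for (v)).
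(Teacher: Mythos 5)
The paper does not prove this proposition at all: it is quoted from \cite{exel2010actions}, so there is no in-paper argument to compare yours against, and your proof fills that gap correctly. The one substantive choice you make --- reading axiom (i) as the equality $\beta_{st} = \beta_s \circ \beta_t$ of partial isomorphisms in $\mathcal{I}_s(A)$, domains included, rather than as mere agreement of values --- is the right one, and it is the reading the paper itself adopts in Section 3 when it identifies actions with inverse semigroup homomorphisms $\beta : S \to \mathcal{I}_s(A)$. It is worth being explicit that this reading is forced, not optional: the literal pointwise form of axiom (i) is not even well-formed (nothing guarantees $\beta_t(a) \in E_{s^{-1}}$ for $a \in E_{t^{-1}}$), and agreement of values on a common domain alone would not yield the inclusion $E_{st} \subseteq \beta_s(E_t \cap E_{s^{-1}})$ in (iii), which is exactly the point you flag as delicate. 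Granting that reading, every step checks: idempotents of an inverse semigroup are self-inverse, so $\beta_f \circ \beta_f = \beta_{ff} = \beta_f$ with $\beta_f$ bijective forces $\beta_f = \mathrm{Id}_{E_f}$; item (i) is the case $f = 1_S$ via $E_{1_S} = A$; equating domains in $\beta_s \circ \beta_{s^{-1}} = \beta_{ss^{-1}} = \mathrm{Id}_{E_{ss^{-1}}}$ yields both $E_s = E_{ss^{-1}}$ and (ii) (the paper instead extracts $E_s = E_{ss^{-1}}$ after the proposition as a consequence of (v)); and (iii), hence (v), follow by equating ranges of the two sides of $\beta_{st} = \beta_s \circ \beta_t$.
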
 

From (v) above we can notice that
\begin{align*}
    E_s = E_{ss^{-1}s} \subseteq E_{ss^{-1}} \subset E_s,
\end{align*}
that is, $E_s = E_{ss^{-1}}$. Hence the domains of the isomorphisms of an inverse semigroup action are determined by the idempotent elements. 

Denote by $\mathcal{I}_{s}(A)$ the semigroup of partial isomorphisms of $A$, that is,
\begin{align*}
    \mathcal{I}_s(A) = \{ \varphi : B \to C : B,C \triangleleft A \text{ and } \varphi \text{ is an isomorphism} \}.
\end{align*}

The operation in $\mathcal{I}_s(A)$ is the composition between maps. It is clear that an action of an inverse semigroup $S$ on $A$ is precisely an inverse semigroup homomorphism $\beta : S \to \mathcal{I}_s(A)$, where $\beta_s = \beta(s)$.

We also remember the definition of groupoid action. According to \cite{bagio2012partial}, given a groupoid $G$ and an $R$-algebra $A$, $\beta = (\{E_g\}_{g \in G}, \{ \beta_g : E_{g^{-1}} \rightarrow E_g\}_{g \in G})$ is said to be an \emph{action of $G$ on $A$} if $E_g = E_{r(g)} \triangleleft A$, $\beta_g$ is an isomorphism of $R$-algebras, for all $g \in G$, and

\begin{itemize}
\item[(i)] $\beta_e = \text{Id}_{E_e}$, for all $e \in G_0$;

\item[(ii)] $\beta_g(\beta_h(a)) = \beta_{gh}(a)$, for all $(g,h) \in G^2$ and $a \in E_{h^{-1}} = E_{(gh)^{-1}}$.\end{itemize}

If $G$ is ordered, we say that $\beta$ is an \emph{ordered action} if $g \leq h$ implies $E_g \subseteq E_h$ and $\beta_g = \beta_h|_{E_{g^{-1}}}$, for $g,h \in G$.

\begin{exe} \label{ex32}
Let $G = \{ r(x), d(x), x, x^{-1}, r(z), z \}$ be the ordered groupoid with relations $z = z^{-1}$, $x \leq z$. We have that $x^{-1} \leq z$ and $r(x), d(x) \leq r(z)$. Then $G$ is an ordered groupoid which is not inductive, since $r(x) \wedge d(x)$ does not exist.

Take a commutative ring $R$ with unity $1_R$ and consider $A = \bigoplus\limits_{i = 1}^4 Re_i$, where the $e_i$'s are orthogonal idempotents which sum is $1_A$. Define \begin{center}$E_z = E_{r(z)} = A$, \quad  $E_x = E_{r(x)} = Re_1 \oplus Re_3$\end{center} and let $\beta_z : E_z \rightarrow E_z$ be such that \begin{center}$\beta_z(ae_1 + be_2 + ce_3 + de_4) = be_1 + ae_2 + de_3 + ce_4$, $a, b, c, d \in R$.\end{center} With these definitions, we already have an ordered groupoid action of $G$ on $A$. In fact, since $x, x^{-1} \leq z$, we have that $E_x, E_{x^{-1}} \subseteq E_z$ and $\beta_x = \beta_z|_{E_{x^{-1}}}$, $\beta_{x^{-1}} = \beta_z|_{E_x}$, from where it follows that $E_{x^{-1}} = Re_2 \oplus Re_4$, $\beta_x(ae_1 + be_3) = ae_2 + be_4$ and $\beta_{x^{-1}}(ae_2 + be_4) = ae_1 + be_3$. Then $\beta$ is an ordered action of $G$ on $A$.
\end{exe}

Denoting by $\mathcal{I}_g(A) = \mathbb{G}(\mathcal{I}_s(A))$, we have that a groupoid action of $G$ on $A$ is precisely a functor $\beta : G \to \mathcal{I}_g(A)$. The product $\varphi \cdot \psi$ in $\mathcal{I}_g(A)$ of two isomorphisms $\varphi : B \to C$, $\psi : B' \to C'$ is defined when $C' = B$ and in this case $\varphi \cdot \psi = \varphi \circ \psi$, the usual composition.

\begin{defi}
An inductive action of an inductive groupoid $G$ on $A$ is an inductive functor $\beta : G \to \mathcal{I}_g(A)$. That is, an ordered groupoid action $\beta = (\{ E_g \}_{g \in G}, \{ \beta_g \}_{g \in G})$ such that $E_{e \wedge f} = E_e \cap E_f$.
\end{defi}

The next result is a direct consequence of the ESN Theorem.

\begin{corollary}
There is a one-to-one correspondence between inverse semigroup actions and inductive groupoid inductive actions induced by the functors $\mathbb{S}$ and $\mathbb{G}$.
\end{corollary}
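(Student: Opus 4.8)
The plan is to recast both kinds of action as morphisms and then apply the ESN Theorem at the level of morphisms, with the codomain held fixed. By the discussion preceding the statement, an inverse semigroup action of $S$ on $A$ is precisely an inverse semigroup homomorphism $\beta : S \to \mathcal{I}_s(A)$, and an inductive action of an inductive groupoid $G$ on $A$ is precisely an inductive functor $\gamma : G \to \mathcal{I}_g(A)$. Since $\mathcal{I}_g(A)$ was defined as $\mathbb{G}(\mathcal{I}_s(A))$, and since Proposition 2.2(iii) gives $\mathbb{S}(\mathcal{I}_g(A)) = \mathbb{S}(\mathbb{G}(\mathcal{I}_s(A))) = \mathcal{I}_s(A)$, the two fixed codomains $\mathcal{I}_s(A)$ and $\mathcal{I}_g(A)$ are carried to one another by $\mathbb{G}$ and $\mathbb{S}$. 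Thus the whole statement reduces to transporting morphisms with a prescribed target across the category isomorphism furnished by the ESN Theorem.

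First I would write down the two assignments explicitly. Given an inverse semigroup action $\beta : S \to \mathcal{I}_s(A)$, I set $\mathbb{G}(\beta) : \mathbb{G}(S) \to \mathbb{G}(\mathcal{I}_s(A)) = \mathcal{I}_g(A)$; by the functoriality recalled just before the ESN Theorem, $\mathbb{G}(\beta)$ is an inductive functor, hence an inductive action of $\mathbb{G}(S)$ on $A$. Conversely, given an inductive action $\gamma : G \to \mathcal{I}_g(A)$, I set $\mathbb{S}(\gamma) : \mathbb{S}(G) \to \mathbb{S}(\mathcal{I}_g(A)) = \mathcal{I}_s(A)$, which is an inverse semigroup homomorphism, hence an inverse semigroup action of $\mathbb{S}(G)$ on $A$. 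I would emphasize that on underlying elements both functors act as the identity ($\mathbb{G}(\beta)(g) = \beta(g)$ and $\mathbb{S}(\gamma)(s) = \gamma(s)$), so these two assignments are genuinely the same data reinterpreted.

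Next I would check that the assignments are mutually inverse, which is immediate from the ESN Theorem: since $\mathbb{S}$ and $\mathbb{G}$ are mutually inverse functors, $\mathbb{S}(\mathbb{G}(\beta)) = \beta$ and $\mathbb{G}(\mathbb{S}(\gamma)) = \gamma$, using once more that $\mathbb{S}(\mathbb{G}(S)) = S$, that $\mathbb{G}(\mathbb{S}(G)) = G$, and the codomain identifications noted above. This produces the claimed one-to-one correspondence.

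The only obstacle, such as it is, is purely bookkeeping: one must match the concrete family description $(\{E_s\}_{s \in S}, \{\beta_s\}_{s \in S})$ of an action with its abstract description as a morphism, and confirm that the inductivity condition $E_{e \wedge f} = E_e \cap E_f$ in the definition of an inductive action is exactly the requirement that $\mathbb{G}(\beta)$ preserve meets of identities. Once the identifications \emph{action $=$ morphism} and $\mathcal{I}_g(A) = \mathbb{G}(\mathcal{I}_s(A))$ are in place, no further computation is required, and the corollary is simply the restriction of the ESN morphism bijection to morphisms whose target is fixed at $\mathcal{I}_s(A)$, equivalently $\mathcal{I}_g(A)$.
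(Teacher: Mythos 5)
Your proof is correct and follows essentially the same route as the paper, which states this corollary as a direct consequence of the ESN Theorem using exactly the identifications you spell out (action $=$ morphism into $\mathcal{I}_s(A)$, respectively into $\mathcal{I}_g(A) = \mathbb{G}(\mathcal{I}_s(A))$). You merely make explicit the bookkeeping that the paper leaves implicit.
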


We usually will forget the functors $\mathbb{S}$ and $\mathbb{G}$ when considering actions of inverse semigroups and inductive actions of inductive groupoids. Thus, if $\beta : S \to \mathcal{I}_s(A)$ is an inverse semigroup action, we will denote $\mathbb{G}(\beta) : \mathbb{G}(S) \to \mathcal{I}_g(A)$ simply as $\beta$.

From now on, assume that $R$ is a ring, $A$ is an $R$-algebra, $S$ is an inverse semigroup and $\beta = (\{ E_s \}_{s \in S}, \{ \beta_s : E_{s^{-1}} \rightarrow E_s \}_{s \in S})$ is an action of $S$ on $A$ where every $E_s$ is a unital $R$-algebra with unity $1_s$. Also assume that $E_s = 1_sA = A1_s$, that is, every $1_s$ is a central idempotent of $A$.

In order to relate the groupoid Galois Theory developed in \cite{paques2018galois} with the inverse semigroup Galois theory, we introduce a special type of action.

\begin{defi}
Let $S$ be an inverse semigroup acting on an $R$-algebra $A$ via action $\beta$. If
\begin{align} \label{hip1}
    A = \bigoplus_{e \in E(S)} E_e,
\end{align}
we say that $\beta$ is an \emph{orthogonal inverse semigroup action}. 

Orthogonal groupoid actions and orthogonal inductive groupoid actions are defined analogously. 
\end{defi}

\begin{obs}
If $S$ has identity $1_S$, (\ref{hip1}) holds if, and only if, $E_e = 0$ for all $e \in E(S)$ such that $e \neq 1_S$, since $E_{1_S} = A$. In particular, every group action is an orthogonal group action.

The condition (\ref{hip1}) tells us more about the action $\beta$. Since $E_{s^{-1}} = E_{s^{-1}s}$, $E_t = E_{tt^{-1}}$ and $s^{-1}s, tt^{-1} \in E(S)$, we have
\begin{align*}
  E_{s^{-1}} \cap E_t = \begin{cases} 0, \text{ if } s^{-1}s \neq tt^{-1} \\ E_{s^{-1}} = E_t, \text{ c.c.} \end{cases}  
\end{align*}

Hence $E_{(st)^{-1}} = \beta_{t^{-1}}(E_{s^{-1}} \cap E_t) = 0$, whenever $s^{-1}s \neq tt^{-1}$. Since $\beta_{st}$ is an isomorphism, $E_{st} = 0$ and $\beta_{st}$ is the zero isomorphism whenever $s^{-1}s \neq tt^{-1}$.

Besides that, notice that $s \leq t$ implies $E_s = E_t\delta_{s,t}$. In fact, there is $e \in E(S)$ such that $s = et$. Moreover, $s \leq t$ implies that $s^{-1} \leq t^{-1}$, from where $E_s \subseteq E_t = E_{tt^{-1}}$. But $E_s = E_{et} \subseteq E_e$. Therefore $E_s \subseteq E_{tt^{-1}} \cap E_e$. Since $tt^{-1}, e \in E(S)$, 
\begin{align*}
    E_s = E_{et} = \begin{cases} 0, \text{ if } tt^{-1} \neq e \\ E_{et}, \text{ c.c.}
\end{cases}
\end{align*}

If $e = tt^{-1}$, then $s = et = tt^{-1}t = t$. Thus, $s \leq t$ implies that $E_s = \delta_{s,t}E_t$.
\end{obs}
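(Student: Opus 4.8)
The remark bundles several assertions, and I would derive all of them from a single engine: the orthogonality hypothesis $A=\bigoplus_{e\in E(S)}E_{e}$ exhibits $A$ as an \emph{internal} direct sum of the ideals indexed by idempotents, so distinct idempotents $e\neq f$ force $E_{e}\cap E_{f}=0$ by uniqueness of representations. I would establish this pairwise triviality first and then feed it into Proposition \ref{propconsacoes} together with the already-noted identity $E_{s}=E_{ss^{-1}}$.

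For the first assertion, suppose $S$ is unital. Axiom (ii) of an action gives $E_{1_{S}}=A$, so $A$ itself occurs as one of the summands of $\bigoplus_{e}E_{e}$; for an internal direct sum this forces every other summand to vanish, i.e. $E_{e}=0$ for $e\neq 1_{S}$, and conversely if all those summands are zero the sum collapses to $E_{1_{S}}=A$. The group case is the special instance $E(S)=\{1_{S}\}$: a group has a unique idempotent, so the only summand is $E_{1_{S}}=A$ and orthogonality is automatic.

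For the dichotomy on intersections I would rewrite $E_{s^{-1}}=E_{s^{-1}s}$ and $E_{t}=E_{tt^{-1}}$ (the derived fact applied to $s^{-1}$ and to $t$), noting $s^{-1}s,\,tt^{-1}\in E(S)$. Pairwise triviality then gives $E_{s^{-1}}\cap E_{t}=E_{s^{-1}s}\cap E_{tt^{-1}}=0$ when $s^{-1}s\neq tt^{-1}$, while equality of these idempotents makes the two ideals coincide. To transfer this to $E_{st}$ I would apply Proposition \ref{propconsacoes}(iii) in its transposed form $\beta_{t^{-1}}(E_{s^{-1}}\cap E_{t})=E_{(st)^{-1}}$: a zero intersection yields $E_{(st)^{-1}}=0$, and since $\beta_{st}\colon E_{(st)^{-1}}\to E_{st}$ is an isomorphism with zero domain, both $E_{st}=0$ and $\beta_{st}=0$ follow.

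Finally, for $s\leq t$ I would unwind the natural partial order as $s=et$ with $e\in E(S)$. Compatibility of $\leq$ with inversion gives $s^{-1}\leq t^{-1}$, hence $E_{s}\subseteq E_{t}=E_{tt^{-1}}$; on the other hand Proposition \ref{propconsacoes}(v) gives $E_{s}=E_{et}\subseteq E_{e}$, so $E_{s}\subseteq E_{tt^{-1}}\cap E_{e}$. The dichotomy now forces $E_{s}=0$ whenever $e\neq tt^{-1}$, and if $e=tt^{-1}$ then $s=et=tt^{-1}t=t$, giving $E_{s}=E_{t}$; together these read $E_{s}=\delta_{s,t}E_{t}$. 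No step here is genuinely hard once the internal-direct-sum engine is in place; the place most prone to slips is the bookkeeping of the correct idempotents ($s^{-1}s$ versus $tt^{-1}$) and the transposed application of Proposition \ref{propconsacoes}(iii), rather than any deep difficulty.
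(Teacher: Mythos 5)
Your proposal is correct and follows essentially the same route as the paper: the unital/group case via $E_{1_S}=A$, the intersection dichotomy via $E_{s^{-1}}=E_{s^{-1}s}$, $E_t=E_{tt^{-1}}$ and Proposition \ref{propconsacoes}(iii) transposed, and the $s\leq t$ analysis via $s=et$ and Proposition \ref{propconsacoes}(v). The only difference is that you make explicit the ``internal direct sum forces pairwise trivial intersections'' step, which the paper leaves implicit.
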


Let $\max E(S)$ (resp. $\max S$) be the set of maximal elements of $E(S)$ (resp. $S$) with respect to the natural partial order. The remarks above yield the next result. 

\begin{prop}
Orthogonal inverse semigroup actions of $S$ on $A$ are precisely the homomorphisms $\beta : S \to \mathcal{I}_s(A)$ such that $E_e = \{ 0 \}$ for all $e \notin \max E(S)$. In particular, there is a one-to-one correspondence between orthogonal inverse semigroup actions and orthogonal inductive groupoid actions.
\end{prop}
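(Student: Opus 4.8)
The plan is to prove the biconditional characterization first and then deduce the correspondence from the Corollary above. For the forward implication, I suppose $\beta$ is orthogonal and take $e \in E(S) \setminus \max E(S)$. By definition of maximality there is $f \in E(S)$ with $e < f$, so the last observation of the Remark (namely, that $s \leq t$ implies $E_s = \delta_{s,t}E_t$) applied to the pair $e < f$ gives $E_e = \delta_{e,f}E_f = 0$. Hence every orthogonal action satisfies $E_e = \{0\}$ for all non-maximal idempotents.

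For the converse I assume $\beta : S \to \mathcal{I}_s(A)$ is a homomorphism with $E_e = \{0\}$ whenever $e \notin \max E(S)$, and I aim to recover $A = \bigoplus_{e \in E(S)} E_e$. First I would establish that the maximal pieces are independent: for distinct $e,f \in \max E(S)$ the element $ef$ lies in $E(S)$ with $ef \leq e$, and $ef = e$ would give $e \leq f$, whence $e = f$ by maximality of $e$; so $ef < e$ and therefore $ef \notin \max E(S)$. Combining Proposition \ref{propconsacoes}(iii) and (iv) (with $e = e^{-1}$) yields $E_e \cap E_f = \beta_e(E_f \cap E_e) = E_{ef} = 0$, so the sum $\sum_{e \in \max E(S)} E_e$ is direct. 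Since the non-maximal pieces vanish, this direct sum is exactly $\bigoplus_{e \in E(S)} E_e$, and it remains only to see that it fills $A$.

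The step I expect to be the main obstacle is precisely this spanning, i.e. showing $\sum_{e \in E(S)} E_e = A$, which does not follow from the vanishing condition by itself. I would obtain it from the non-degeneracy of the action, $A = \sum_{s \in S} E_s$; together with $E_s = E_{ss^{-1}}$ and $ss^{-1} \in E(S)$ this gives $A = \sum_{e \in E(S)} E_e = \sum_{e \in \max E(S)} E_e$, which with the independence established above completes $A = \bigoplus_{e \in E(S)} E_e$ and shows $\beta$ is orthogonal.

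Finally, for the one-to-one correspondence I would invoke the Corollary that gives the bijection between inverse semigroup actions and inductive groupoid inductive actions via $\mathbb{S}$ and $\mathbb{G}$, and then check that orthogonality is preserved in both directions. Under $\mathbb{G}$ the family $\{E_s\}_{s \in S}$ and the natural partial order are left unchanged, and the idempotent set $E(S)$ is identified with the identity set $G_0$ of $\mathbb{G}(S)$; hence the condition $A = \bigoplus_{e \in E(S)} E_e$ is literally the defining condition $A = \bigoplus_{e \in G_0} E_e$ of an orthogonal inductive groupoid action, and $\max E(S)$ corresponds to the maximal identities. The bijection of the Corollary therefore restricts to a bijection between orthogonal inverse semigroup actions and orthogonal inductive groupoid actions, as claimed.
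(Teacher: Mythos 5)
Your forward implication and your handling of the ``in particular'' part are correct and are essentially what the paper intends: the paper gives no separate proof (it says the preceding remarks ``yield'' the result), and those remarks consist precisely of the fact $s \leq t \Rightarrow E_s = \delta_{s,t}E_t$ that you quote, while the correspondence part is, as you say, just the earlier Corollary together with the observation that orthogonality is the same condition on both sides of it. Your directness argument for $\sum_{e \in \max E(S)} E_e$ is also sound, with one small caveat: pairwise trivial intersections alone do not make a sum of submodules direct; here it works because of the paper's standing assumption $E_e = 1_e A$ with $1_e$ central, so that $E_e \cap E_f = 1_e 1_f A = 0$ forces $1_e 1_f = 0$, and directness of the sum of ideals generated by pairwise orthogonal central idempotents follows.

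The genuine gap is the step you yourself flag as the main obstacle. The identity $A = \sum_{s \in S} E_s$ (``non-degeneracy'') is not part of this paper's definition of an inverse semigroup action: the definition requires only that the $E_s$ be ideals, that $\beta_s\beta_t = \beta_{st}$, and that $E_{1_S} = A$ \emph{when $S$ has an identity}. For non-unitary $S$ nothing forces the ideals to span $A$, so your converse cannot be derived from the paper's hypotheses --- and in fact it is false without some such assumption: take $S = \{e, f, ef\}$ the non-unitary three-element semilattice, $A \neq 0$, and the zero action $E_s = \{0\}$ for all $s \in S$; then $E_{e'} = \{0\}$ for every non-maximal idempotent $e'$, yet $\bigoplus_{e' \in E(S)} E_{e'} = 0 \neq A$, so this action is not orthogonal. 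In other words, the ``precisely'' in the proposition tacitly presupposes exactly the non-degeneracy you imported. This is as much a defect of the paper's statement as of your argument --- the paper's remark settles the biconditional only in the unitary case, where condition (ii) of the definition of an action supplies $E_{1_S} = A$, and the general converse is never addressed --- but as a blind proof of the proposition as stated, your appeal to $A = \sum_{s \in S} E_s$ is an assumption you introduced, not a fact available in the paper, so the converse direction remains unproved (indeed unprovable) within the paper's framework.
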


\begin{exe} \label{exgeral}
Let $R$ be a commutative ring, $n > 0$ be an integer and $A$ be an $R$-algebra  that splits as
\begin{align*}
    A = \bigoplus_{i=1}^{nk} Re_i,
\end{align*}
where $n = \binom{m}{k}$, for some positive integers $m,k$, and the $e_i$'s are orthogonal idempotents such that $\sum_i e_i = 1_A$. 

We will construct an inverse semigroup $S$ and an orthogonal action $\beta$ of $S$ on $A$.

Let $X = \{ 1, \ldots, m \}$ and consider $\mathcal{I}_s(X)$. Denote by $S = \{ s \in \mathcal{I}_s(X) : \# \text{dom}(s) \leq k \}$.  Enumerate the set of $k$-subsets of the elements of $X$ as $M_1, M_2, \ldots, M_{n}$. Then $|M_i| = k$, for all $1 \leq i \leq n$. Thus, if $s \in \max S$, then dom$(s) = M_i$, for some $1 \leq i \leq n$.

We can associate the elements $e_i$ with the $k$-combinations $M_j$ as it follows: 
\begin{align*}
    \{ e_1, \ldots, e_k \} & \leftrightarrow M_1 \\
    \{ e_{k+1}, \ldots, e_{2k} \} & \leftrightarrow M_2 \\
    & \vdots \\
    \{ e_{(n-1)k + 1}, \ldots, e_{nk} \} &  \leftrightarrow M_n
\end{align*}
increasing ordered, that is, $M_i = \{ m_{i_1}, \ldots , m_{i_k} \}$ with $m_{i_j} \leq m_{i_{j+1}}. $ We can, then, write $A_{M_i} = \bigoplus_{j = 1}^k Re_{(i - 1)k + j}$. 

For $s \in \max S$, we have that dom$(s) = M_i$ and Im$(s) = M_j$. We have that $e_{(i-1)k + p}$ is associated to a $m_{i_p} \in M_i$. We know that $s(m_{i_p}) = m_{j_q} \in M_j$. Denote by $\overline{s}(p) = q$. We can define the map $\tilde{s} : \{ (i-1)k + 1, \ldots , ik \} \to \{ (j-1)k+1, \ldots , jk \}$ by $\tilde{s}((i-1)k + p) = (j-1)k + q = (j-1)k + \overline{s}(p)$, uniquelly defined by $s$.

Define $E_s = A_{M_j}$, $E_{s^{-1}} = A_{M_i}$,
\begin{align*}
    \beta_s\left ( \sum_{l = 1}^k r_le_{(i-1)k + l} \right ) = \sum_{l = 1}^k r_le_{(j-1)k + \overline{s}(l)}.
\end{align*}

For $s \notin \max S$, define $E_s = E_{s^{-1}} = \{ 0 \}$ and $\beta_s$ the zero isomorphism. Then $\beta = (\{ E_s \}_{s \in S}, \{ \beta_s \}_{s \in S})$ is an orthogonal action of $S$ on $A$. In fact, let $s, t \in S$ be such that dom$(s) = M_g$, Im$(s) = M_h$, dom$(t) = M_i$, Im$(t) = M_j$. If $j \neq g$, then $\beta_s \circ \beta t = 0 = \beta_{st}$. So assume that Im$(t) = M_g$. Thus,
\begin{align*}
    \beta_s\left (\beta_t\left ( \sum_{l = 1}^k r_le_{(i-1)k + l} \right )\right )  & = \beta_s\left ( \sum_{l = 1}^k r_le_{(g-1)k + \overline{t}(l)} \right ) \\
    & = \sum_{l = 1}^k r_le_{(h-1)k + \overline{s}(\overline{t}(l))} \\
    & = \sum_{l = 1}^k r_le_{(h-1)k + \overline{st}(l)} \\
    & = \beta_{st} \left ( \sum_{l = 1}^k r_le_{(g-1)k + l} \right ).
\end{align*}

By our construction, $S$ is unitary if, and only if, $n=1$ and $m = k$. In this case, $A = \bigoplus_{i = 1}^m Re_i$ and $M_1 \leftrightarrow \{ e_1, \ldots, e_m \}$, so that $E_{1_S} = A_{M_1} = A$ and $\tilde{1_S}$ is the identity map in $\{ 1, \ldots, m \}$, so $\beta_{1_S} = \text{Id}_A$, as required.

The fact that $\beta$ is orthogonal is immediate. In Example \ref{exfinal} we will construct an orthogonal action using these steps with $m = n = 3, k = 2$.
\end{exe}

The next result shows a relation between orthogonal actions and inductive groupoid actions.

\begin{prop}
Let $G$ be an inductive groupoid and $\beta : G \to \mathcal{I}_g(A)$ an orthogonal ordered groupoid action. Then $\beta$ is an orthogonal inductive groupoid action.
\end{prop}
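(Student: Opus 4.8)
The plan is to verify the single extra condition that upgrades an ordered groupoid action to an inductive one, namely that $E_{e \wedge f} = E_e \cap E_f$ for all identities $e, f \in G_0$. Note that the meet $e \wedge f$ exists precisely because $G$ is inductive, so $G_0$ is a meet-semilattice. I would prove the two inclusions separately, and the whole argument hinges on extracting pairwise orthogonality from the direct-sum hypothesis.

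First I would establish the inclusion $E_{e \wedge f} \subseteq E_e \cap E_f$, which does not use orthogonality at all. Since $e \wedge f \leq e$ and $e \wedge f \leq f$, and $\beta$ is an ordered action, the defining property $g \leq h \Rightarrow E_g \subseteq E_h$ gives $E_{e \wedge f} \subseteq E_e$ and $E_{e \wedge f} \subseteq E_f$, hence $E_{e \wedge f} \subseteq E_e \cap E_f$.

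Next I would use orthogonality to control $E_e \cap E_f$. From $A = \bigoplus_{g \in G_0} E_g$, the uniqueness of representations in an internal direct sum of ideals forces $E_e \cap E_f = 0$ whenever $e \neq f$; equivalently, writing $1_A = \sum_{g \in G_0} 1_g$ exhibits the units $1_g$ as orthogonal central idempotents, so that $E_e \cap E_f = 1_e 1_f A = 0$ for $e \neq f$. Combining this with the first step finishes the proof by cases: if $e = f$ the identity $E_{e \wedge e} = E_e = E_e \cap E_e$ is immediate, while if $e \neq f$ the inclusion $E_{e \wedge f} \subseteq E_e \cap E_f = 0$ forces $E_{e \wedge f} = 0 = E_e \cap E_f$. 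Thus the inductive condition holds, and since $\beta$ is orthogonal by hypothesis it is an orthogonal inductive groupoid action.

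I do not expect a serious obstacle here, as the statement is essentially a bookkeeping consequence of the direct-sum decomposition together with the ordered-action axioms. The only point that genuinely requires care is the passage from the \emph{global} orthogonal decomposition indexed by all of $G_0$ to the \emph{pairwise} statement $E_e \cap E_f = 0$; this is where one must invoke that distinct summands of an internal direct sum intersect trivially (equivalently, the orthogonality of the idempotents $1_g$). Everything else is forced by the ordered-action axioms and by the existence of meets in the inductive groupoid $G$.
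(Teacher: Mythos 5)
Your proof is correct and follows essentially the same route as the paper's: a case split on $e = f$ versus $e \neq f$, with orthogonality forcing $E_{e \wedge f} = \{0\} = E_e \cap E_f$ in the second case. The only cosmetic difference is that the paper cites its earlier remark that $g \leq h$ implies $E_g = \delta_{g,h}E_h$ under orthogonality, whereas you rederive the needed vanishing directly from the ordered-action inclusion $E_{e \wedge f} \subseteq E_e \cap E_f$ together with the trivial pairwise intersection of direct summands.
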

\begin{proof}
Let $e,f \in G_0$. If $e = f$, then $E_e \cap E_f = E_e \cap E_e = E_e = E_{e \wedge f}$. If $e \neq f$, then $e \wedge f \leq e$ and $e \wedge f \leq f$, so we have $E_{e \wedge f} = E_{f}\delta_{e \wedge f, f} = E_{e}\delta_{e \wedge f, e}$, because $\beta$ is orthogonal. Since $e \neq f$ it is not possible that both of the $\delta$'s above are simultaneosly non zero. So $E_{e \wedge f} = \{ 0 \} = E_e \cap E_f$. 
\end{proof}

Also, recall the definitions of crossed products.

According to \cite{exel2010actions}, given an inverse semigroup $S$ and an action $\beta$ of $S$ on an $R$-algebra $A$, we define
\begin{align*}
    L = L(A,S,\beta) = \left \{ \sum\limits_{s \in S}^{\text{finite}} au_s : a \in E_s \right \} = \bigoplus\limits_{s \in S} E_su_s,
\end{align*}
where every $u_s$ is a symbol. The addition is usual and the product is given by
\begin{align*}
    (au_s)(bu_t) = \beta_s(\beta_{s^{-1}}(a)b)u_{st},
\end{align*}
linearly extended.

The crossed product $L$ is not always associative. For the next definition, we will assume that $L$ is associative and thereby an $R$-algebra. A sufficient condition for associativity is presented in \cite[Theorem 3.4]{exel2010actions}. 

In the same conditions of the above definition, consider $N = \langle au_s - au_t : s \leq t, a \in E_s \rangle \triangleleft L$. We define the skew inverse semigroup ring $A \ltimes_{\beta} S$ as
\begin{align*}
    A \ltimes_{\beta} S = \displaystyle\frac{L}{N}.
\end{align*}

\begin{prop} \label{propisoskewsemi}
If $\beta : S \to \mathcal{I}_s(A)$ is an orthogonal inverse semigroup action, then $L(A,S,\beta) = A \ltimes_\beta S$.
\end{prop}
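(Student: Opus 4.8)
The plan is to show that the two-sided ideal $N$ defining the quotient is trivial, so that the canonical projection $L \to L/N = A \ltimes_\beta S$ is an isomorphism. Since by definition $A \ltimes_\beta S = L/N$ with $N = \langle au_s - au_t : s \leq t, a \in E_s \rangle$, the equality $L = A \ltimes_\beta S$ is equivalent to $N = \{0\}$, and it suffices to verify that every generator $au_s - au_t$ of $N$ already vanishes in $L$.

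First I would fix an arbitrary generator $au_s - au_t$ of $N$, so that $s \leq t$ and $a \in E_s$. The key tool is the remark established just before this proposition: for an orthogonal action, $s \leq t$ forces $E_s = \delta_{s,t}E_t$. This is precisely where the orthogonality hypothesis (\ref{hip1}) enters, via $E_{s^{-1}} = E_{s^{-1}s}$, $E_t = E_{tt^{-1}}$ and the fact that distinct idempotents give orthogonal components.

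With that in hand the two cases are immediate. If $s = t$, then $au_s - au_t = 0$ trivially. If $s \neq t$ (so $s < t$), then $\delta_{s,t} = 0$, whence $E_s = \{0\}$; since $a \in E_s$ this gives $a = 0$, and again $au_s - au_t = 0$. Thus every generator of $N$ is zero, so $N = \{0\}$ and therefore $L = L/N = A \ltimes_\beta S$.

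I expect the argument to be essentially immediate once the orthogonality remark is invoked, and there is no serious obstacle: the substantive step, and the only place the hypothesis is used, is the reduction $E_s = \delta_{s,t}E_t$, which collapses every relation generating $N$. The single point meriting a word of care is that the statement presupposes $A \ltimes_\beta S$ is well defined, i.e.\ that $L$ is associative; this should be recorded as assumed, exactly as in the preceding definition of the skew inverse semigroup ring.
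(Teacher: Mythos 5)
Your proof is correct and follows exactly the paper's own argument: both show $N=0$ by invoking the orthogonality remark $s \leq t \Rightarrow E_s = \delta_{s,t}E_t$ and splitting into the cases $s = t$ and $s \neq t$, where each generator of $N$ vanishes. Your added note that associativity of $L$ is presupposed is a reasonable observation but does not change the substance.
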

\begin{proof}
It follows from $N = 0$. In fact, given $s, t \in S$ such that $s \leq t$, we have that $E_{s} = E_t\delta_{s,t}$. If $s \neq t$, then $au_s - au_t = 0u_s - 0u_t = 0$, since $a \in E_s$ implies $a = 0$. If $s = t$, then $au_s - au_t = au_t - au_t = 0$, for all $a \in E_s = E_t$.
\end{proof}

Let $G$ be a groupoid and $\beta$ be an action of $G$ on an $R$-algebra $A$. According to \cite{bagio2010partial}, the skew groupoid ring $A \ltimes_{\beta} G$ is defined as
\begin{align*}
    A \ltimes_{\beta} G = \left \{ \sum\limits_{s \in S}^{finite} au_g : a \in E_g \right \} = \bigoplus\limits_{g \in S} E_gu_g,
\end{align*}
where the $u_g$'s are symbols. The addition is usual and the product is given by
\begin{align*}
    (au_g)(bu_h) = \begin{cases} \beta_g(\beta_{g^{-1}}(a)b)u_{gh}, \text{ if } (g,h) \in G^2 \\ 0, \text{ c.c.} \end{cases}
\end{align*}
and linearly extended. In the same way of the inverse semigroup case, $A \ltimes_\beta G$ is not always associative. A sufficient condition to associativity is given in \cite[Proposition 3.1]{bagio2010partial}.

Now we give a definition of crossed product for ordered groupoids.

\begin{defi}
Let $G$ be an ordered groupoid and $\beta$ an ordered groupoid action of $G$ on $A$. Assume that $A \ltimes_\beta G$ is associative. Define $N = \langle au_g - au_h : a \in E_g, g \leq h \rangle \triangleleft A \ltimes_{\beta} G$. We define the skew ordered groupoid ring $A \ltimes_{\beta}^o G$ as
\begin{align*}
    A \ltimes_{\beta}^o G = \displaystyle\frac{A \ltimes_{\beta} G}{N}.
\end{align*}
\end{defi}

It is easy to see that just as in the case of Proposition \ref{propisoskewsemi}, if $\beta$ is orthogonal, then $A \ltimes_\beta^o G = A \ltimes_\beta G$.

\begin{corollary} \label{corolarioprodutoscruzados}
\emph{(i)} Let $S$ be an inverse semigroup and $\beta$ be an orthogonal action of $S$ on $A$. Then $A \ltimes_\beta S \simeq A \ltimes_\beta \mathbb{G}(S)$.

\emph{(ii)} Let $G$ be an inductive groupoid and $\beta$ be an orthogonal inductive groupoid action of $G$ on $A$. Then $A \ltimes_\beta G \simeq A \ltimes_\beta \mathbb{S}(G)$.
\end{corollary}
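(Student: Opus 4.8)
The plan is to prove (i) directly and then deduce (ii) from it via the ESN identities. For (i), I would first collapse both skew rings onto a common $R$-module. By Proposition \ref{propisoskewsemi}, orthogonality forces $N = 0$, so $A \ltimes_\beta S = L(A,S,\beta) = \bigoplus_{s \in S} E_s u_s$; and since the paper already observes that orthogonality yields $A \ltimes_\beta^o \mathbb{G}(S) = A \ltimes_\beta \mathbb{G}(S)$, the groupoid side is $A \ltimes_\beta \mathbb{G}(S) = \bigoplus_{g \in \mathbb{G}(S)} E_g u_g$. The underlying set of $\mathbb{G}(S)$ is $S$ itself and $E_g = E_s$ for $g = s$, so both rings carry the same underlying module $\bigoplus_{s \in S} E_s u_s$. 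I would therefore take $\Phi$ to be the identity on basis elements, $\Phi(a u_s) = a u_s$, which is automatically an $R$-module isomorphism; everything then reduces to checking that $\Phi$ respects the two multiplications.

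For multiplicativity, the only difference between the products is the combinatorics of the indices. In $A \ltimes_\beta S$ the product $(au_s)(bu_t) = \beta_s(\beta_{s^{-1}}(a)b)u_{st}$ uses the full semigroup product $st$, whereas in $A \ltimes_\beta \mathbb{G}(S)$ the factor $u_{st}$ appears only when the restricted product $s \cdot t$ is defined, i.e. when $s^{-1}s = tt^{-1}$, and the product is declared $0$ otherwise. When $s^{-1}s = tt^{-1}$ the restricted product equals $st$ and the two formulas coincide verbatim. When $s^{-1}s \neq tt^{-1}$, I would invoke the Remark following the definition of orthogonal action: since $a \in E_s$ gives $\beta_{s^{-1}}(a) \in E_{s^{-1}}$ and $b \in E_t$, the element $\beta_{s^{-1}}(a)b$ lies in $E_{s^{-1}} \cap E_t = \{0\}$, so the semigroup product $(au_s)(bu_t)$ already vanishes, matching the groupoid product. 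Extending linearly, $\Phi$ is a ring homomorphism, hence an isomorphism.

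For (ii), I would not repeat the computation but apply (i) to the inverse semigroup $S = \mathbb{S}(G)$. By \cite[Proposition 4.1.7]{lawson1998inverse} we have $\mathbb{G}(\mathbb{S}(G)) = G$, and the orthogonal inductive action of $G$ corresponds under $\mathbb{S}$ to an orthogonal action of $S$. Part (i) then gives $A \ltimes_\beta \mathbb{S}(G) = A \ltimes_\beta S \simeq A \ltimes_\beta \mathbb{G}(S) = A \ltimes_\beta \mathbb{G}(\mathbb{S}(G)) = A \ltimes_\beta G$, as desired.

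I expect the single delicate point to be the off-diagonal vanishing in the multiplicativity check: one must be sure that whenever the restricted product is undefined the semigroup product is forced to be zero, and this is exactly where orthogonality (through $E_{s^{-1}} \cap E_t = \{0\}$ for $s^{-1}s \neq tt^{-1}$) does the work. Everything else is bookkeeping on a fixed underlying module.
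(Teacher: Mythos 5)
Your proof of (i) is correct and is essentially the paper's own argument: the same identity map $au_s \mapsto au_s$ on the common underlying module $\bigoplus_{s \in S} E_s u_s$ (justified by Proposition \ref{propisoskewsemi} and the analogous observation that $A \ltimes_\beta^o \mathbb{G}(S) = A \ltimes_\beta \mathbb{G}(S)$ for orthogonal actions), and the same multiplicativity check, with orthogonality forcing $\beta_{s^{-1}}(a)b \in E_{s^{-1}} \cap E_t = E_{s^{-1}s} \cap E_{tt^{-1}} = \{0\}$ whenever $s^{-1}s \neq tt^{-1}$, so that the semigroup-side product vanishes exactly where the groupoid-side product is declared zero. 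Where you genuinely differ is in (ii): the paper repeats the computation symmetrically (it defines $\psi(au_g) = au_g$ and states the verification is similar), whereas you deduce (ii) formally from (i) by applying it to the inverse semigroup $\mathbb{S}(G)$ and invoking $\mathbb{G}(\mathbb{S}(G)) = G$ from \cite[Proposition 4.1.7]{lawson1998inverse}. Your route is more economical and makes the logical role of the ESN identities explicit; its only obligation is to know that the action of $\mathbb{S}(G)$ corresponding to $\beta$ is again orthogonal, which the paper's earlier proposition on the one-to-one correspondence between orthogonal inverse semigroup actions and orthogonal inductive groupoid actions supplies. Both arguments are sound and rest on the same single computation.
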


\begin{proof}
Consider the natural maps
\begin{align*}
    \varphi : A \ltimes_\beta S & \rightarrow A \ltimes_\beta \mathbb{G}(S) \\
    au_s & \mapsto au_s
\end{align*}
and
\begin{align*}
    \psi : A \ltimes_\beta G & \rightarrow A \ltimes_\beta \mathbb{S}(G) \\
    au_g & \mapsto au_g.
\end{align*}

By the comments above, we do not need to worry about equivalence classes. These maps are obviously $R$-module isomorphisms. We will show that $\varphi$ preserves product, the other case is similar. Take $au_s, bu_t \in A \ltimes_\beta S$. Then
\begin{align*}
    \varphi((au_s)(bu_t)) & = \varphi(\beta_s(\beta_{s^{-1}}(a)b)u_{st}).
\end{align*}

However
\begin{align*}
    \beta_s(\beta_{s^{-1}}(a)b) = \begin{cases}
        \beta_s(\beta_{s^{-1}}(a)b), \text{ if } s^{-1}s = tt^{-1} \\
        0, \text{ c.c.}
    \end{cases}
\end{align*}
since $\beta_{s^{-1}}(a)b \in E_{s^{-1}} \cap E_t = E_{s^{-1}s} \cap E_{tt^{-1}}$. Hence
\begin{align*}
    \varphi((au_s)(bu_t)) = \begin{cases}
    \beta_s(\beta_{s^{-1}}(a)b)u_{s \cdot t}, \text{ if } d(s) = r(t) \text{ in } \mathbb{G}(S) \\
    0, \text{ c.c.}
    \end{cases}
\end{align*}

Observe that this is exactly the product in $A \ltimes_\beta \mathbb{G}(S)$. Therefore, \[ 
    \varphi((au_s)(bu_t)) = \varphi(au_s)\varphi(bu_t). \qedhere \]
\end{proof}

Notice that this corollary tells us that when $\beta$ is orthogonal, $A \ltimes_\beta S$ is associative if and only if $A \ltimes_\beta \mathbb{G}(S)$ is associative.

\section{Normal substructures and quotients}

In this section we will understand the relations between the normal inverse subsemigroups of an inverse semigroup $S$ and the normal ordered subgroupoids of $\mathbb{G}(S)$. After this, we will analyze the quotient structures.

An inverse subsemigroup $T$ of $S$ is called \emph{full} if $E(T) = E(S)$. Every full inverse subsemigroup is an order ideal. To see that, just observe that if $t \in T$, $s \in S$, $s \leq t$ and $T$ is a full inverse subsemigroup, we have that $s = ti$, for some $i \in E(S) \subseteq T$, thus $s = ti \in T$. A subgroupoid $H$ of $G$ is called \emph{wide} if $H_0 = G_0$. It is easy to see that a wide ordered subgroupoid is a wide \emph{inductive} subgroupoid.

The ESN Theorem applied in the inclusion maps guarantees that there is a one-to-one correspondence between the full inverse subsemigroups of $S$ and the wide ordered subgroupoids of $\mathbb{G}(S)$.

Let us study the relations between normal inverse subsemigroups and normal ordered subgroupoids. This will allow us to understand the relations between the quotient structures.

\begin{defi}[Normal inverse subsemigroup] \cite{lawson1998inverse}
Let $S$ be an inverse semigroup and $T$ an inverse subsemigroup of $S$. We say that $T$ is \emph{normal} if

(i) $T$ is full;

(ii) For all $s \in S$, $s^{-1}Ts \subseteq T$.
\end{defi}

\begin{defi}[Normal ordered subgroupoid] \label{grpnormal} \cite[Definition 4.1]{alyamani2016fibrations}
Let $G$ be an ordered groupoid and $H$ an ordered subgroupoid of $G$. We say that $H$ is \emph{normal} if

(i) $H$ is wide;

(ii) Given $a, b \in G$ such that $a$ and $b$ have an upper bound, that is, there is $g \in G$ such that $a \leq g$, $b \leq g$, then $a^{-1} \cdot H \cdot b \subseteq H$.
\end{defi}

Observe that if we take a groupoid $G$ without partial order, we can consider $G$ an ordered groupoid by taking the equality as partial order. In this case, Definition \ref{grpnormal} is the same as the standard definition of normal subgroupoid.

In \cite[Example 4.3]{alyamani2016fibrations} it was proven that there is a one-to-one correspondence between the normal inverse subsemigroups of an inverse semigroup $S$ and the normal ordered subgroupoids of $\mathbb{G}(S)$.

Let $(G, \leq)$ be an ordered groupoid and $H$ a normal ordered subgroupoid of $G$. We define the relation $\sim_H$ as, given $a,b \in G$,
\begin{align*}
    a \sim_H b \Leftrightarrow \text{ there are } x, y, u, v \in H & \text{ such that } \exists x \cdot a \cdot y, \exists u \cdot b \cdot v, \\  x \cdot a \cdot y \leq b & \text{ and } u \cdot b \cdot v \leq a.
\end{align*}

By \cite{alyamani2016fibrations} this is an equivalence relation, therefore the quotient $G / H := G/\sim_H$ is well-defined. Define the partial order $\leq_H$ as:
\begin{align*}
    [a]_H \leq_H [b]_H \Leftrightarrow \text{ there are } x, y \in H \text{ such that } \exists x \cdot a \cdot y \text{ and } x \cdot a \cdot y \leq b.
\end{align*}

By \cite[Theorem 4.14]{alyamani2016fibrations} we have that $G/H$ is an ordered groupoid. The operation in $G/H$ is given subsequently. If $d(g) \sim_H r(h)$, then there are $a, b \in H$ such that $r(a) \leq d(g)$, $d(a) = r(h)$, $r(b) \leq r(h)$ e $d(b) = d(g)$. Denoting by $g' = (g | r(a))$ and $h' = (r(b) | h)$, we define
\begin{align*}
    [g]_H \cdot [h]_H = [g' \cdot a \cdot h]_H = [g \cdot b^{-1} \cdot h']_H.
\end{align*}

This definition does not depend on the choice of $a$ and $b$. In fact, for any elements $x, y \in H$ such that $r(x) \leq d(g)$, $d(x) = r(h)$, $r(y) \leq r(h)$ and $d(y) = d(g)$, we have, denoting by $g'' = (g|r(x))$, $h'' = (r(y) | h)$, that
\begin{align*}
    [g' \cdot a \cdot h]_H = [g'' \cdot x \cdot h]_H \text{ and } [g \cdot b^{-1} \cdot h']_H = [g \cdot y^{-1} \cdot h'']_H.
\end{align*}

Now we will translate this definition to the case of inverse semigroups. Given an inverse semigroup $S$ and a normal inverse subsemigroup $T$, we define the relation $\sim_T$ as, given $a, b \in S$,
\begin{align*}
    a \sim_T b \Leftrightarrow \text{ there are } x, y, u, v \in T & \text{ such that } \exists x \cdot a \cdot y, \exists u \cdot b \cdot v, \\ x \cdot a \cdot y \leq b & \text{  and } u \cdot b \cdot v \leq a.
\end{align*}

Notice that we used the restricted product in this definition. The order $\leq_T$ is defined in the same way of the case of inductive groupoids. We denote by $S/T := S/\sim_T$.

A quotient of an inverse semigroup by a normal inverse subsemigroup is always an ordered groupoid \cite[Theorem 3.6]{alyamani2018ordered}, but in general is not an inductive groupoid, that is, an inverse semigroup \cite[Example 3.5]{alyamani2018ordered}. A condition to that case appears in \cite[Proposition 3.9]{alyamani2018ordered}. 

In the case that the set of idempotents of $S/T$ is a meet semilattice, the operation in $S/T$, denoted by $\star$, will be given in terms of the operation in $\mathbb{G}(S)/\mathbb{G}(T)$, that is,
\begin{align*}
    [s]_T \star [t]_T = ([s]_T | [e]_T) \cdot ([e]_T | [t]_T),
\end{align*}
where $[e]_T = [s^{-1}s]_T \wedge [tt^{-1}]_T$.

\begin{theorem} \label{teoquoc}
\emph{(i)} Let $S$ be an inverse semigroup and $T$ a normal inverse subsemigroup of $S$. If $S/T$ is an inverse semigroup, then $\mathbb{G}(S/T) \simeq \mathbb{G}(S)/\mathbb{G}(T)$.

\emph{(ii)} Let $G$ be an inductive groupoid and $H$ a normal ordered subgroupoid of $G$. If $G/H$ is an inductive groupoid, then $\mathbb{S}(G/H) \simeq \mathbb{S}(G)/\mathbb{S}(H)$.
\end{theorem}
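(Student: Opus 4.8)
The plan is to show that both statements reduce, via the Ehresmann--Schein--Nambooripad correspondence, to the identities $\mathbb{G}(\mathbb{S}(G)) = G$ and $\mathbb{S}(\mathbb{G}(S)) = S$ of Proposition 4.1.7 in \cite{lawson1998inverse}, once it is checked that the two quotient constructions are literally the same ordered groupoid. I would treat (i) in detail; (ii) is entirely dual, exchanging the roles of $\mathbb{G}$ and $\mathbb{S}$.

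First I would record that the quotients make sense and live on a common underlying set. Since $T$ is a normal inverse subsemigroup of $S$, the correspondence of \cite[Example 4.3]{alyamani2016fibrations} makes $\mathbb{G}(T)$ a normal ordered subgroupoid of $\mathbb{G}(S)$, so $\mathbb{G}(S)/\mathbb{G}(T)$ is a well-defined ordered groupoid. Next, because the restricted product and the natural partial order of $S$ are, by definition, exactly the multiplication and the order of the groupoid $\mathbb{G}(S)$, and $\mathbb{G}(T)$ has underlying set $T$, the relation $\sim_T$ on $S$ and the relation $\sim_{\mathbb{G}(T)}$ on $\mathbb{G}(S)$ are given by identical formulas; hence they are the same equivalence relation, and the identity map of $S$ descends to a bijection $\theta \colon [s]_T \mapsto [s]_{\mathbb{G}(T)}$. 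The same observation shows that $\leq_T$ and $\leq_{\mathbb{G}(T)}$ coincide, so $\theta$ is an order isomorphism and carries identities to identities.

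The heart of the argument is to identify the two partial multiplications. Here I would use that the inverse semigroup operation $\star$ on $S/T$ was \emph{defined} to be the pseudoproduct of the ordered groupoid $\mathbb{G}(S)/\mathbb{G}(T)$, namely $[s]_T \star [t]_T = ([s]_T | [e]_T) \cdot ([e]_T | [t]_T)$ with $[e]_T = [s^{-1}s]_T \wedge [tt^{-1}]_T$. In other words, under $\theta$ the inverse semigroup $S/T$ is precisely $\mathbb{S}(\mathbb{G}(S)/\mathbb{G}(T))$; the hypothesis that $S/T$ is an inverse semigroup is exactly the statement that $\mathbb{G}(S)/\mathbb{G}(T)$ is inductive, i.e. that its identities form a meet semilattice, which is what allows $\star$ to be formed. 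Applying $\mathbb{G}$ and invoking $\mathbb{G}(\mathbb{S}(\cdot)) = \mathrm{id}$ for the inductive groupoid $\mathbb{G}(S)/\mathbb{G}(T)$ then gives $\mathbb{G}(S/T) = \mathbb{G}(\mathbb{S}(\mathbb{G}(S)/\mathbb{G}(T))) = \mathbb{G}(S)/\mathbb{G}(T)$, realized by $\theta$ as an isomorphism of inductive groupoids. For (ii) the symmetric computation $\mathbb{S}(G)/\mathbb{S}(H) = \mathbb{S}(\mathbb{G}(\mathbb{S}(G))/\mathbb{G}(\mathbb{S}(H))) = \mathbb{S}(G/H)$ uses $\mathbb{G}(\mathbb{S}(G)) = G$ and $\mathbb{G}(\mathbb{S}(H)) = H$ together with the fact that $\mathbb{S}(H)$ is a normal inverse subsemigroup of $\mathbb{S}(G)$.

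I expect the main obstacle to be the bookkeeping behind the claim that the multiplications agree, that is, checking that the ordered groupoid multiplication placed on $S/T$ (from \cite{alyamani2018ordered}) is genuinely the quotient multiplication of $\mathbb{G}(S)/\mathbb{G}(T)$ (from \cite{alyamani2016fibrations}), rather than merely agreeing up to relabelling. Concretely, one must verify that the restricted product recovered from the pseudoproduct $\star$ on $S/T$ reproduces the quotient formula $[g]_H \cdot [h]_H = [g' \cdot a \cdot h]_H$, independently of the representative data $a, b \in H$; once that coherence is in place, the collapse to Proposition 4.1.7 is automatic. A secondary point to be careful about is that the equivalence of ``$S/T$ is an inverse semigroup'' with ``$\mathbb{G}(S)/\mathbb{G}(T)$ is inductive'' is used in both directions, so I would state it explicitly before invoking it.
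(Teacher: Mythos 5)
Your proposal is correct and follows essentially the same route as the paper: both proofs rest on the observation that $\sim_T$ and $\sim_{\mathbb{G}(T)}$ (together with the corresponding orders and partial products) are given by literally the same formulas, so the isomorphism is the map $[s]_T \mapsto [s]_{\mathbb{G}(T)}$ induced by the identity on representatives. Where the paper verifies by hand that this natural map is a well-defined, bijective inductive functor, you shortcut the product-preservation check by noting that $\star$ is by definition the pseudoproduct of the ordered groupoid $\mathbb{G}(S)/\mathbb{G}(T)$ and then invoking $\mathbb{G}(\mathbb{S}(\cdot))=\mathrm{id}$ and $\mathbb{S}(\mathbb{G}(\cdot))=\mathrm{id}$; this is a cleaner packaging of the same argument, not a different one.
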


\begin{proof}
(i): Denote by $H = \mathbb{G}(T)$. We define the natural map
\begin{align*}
    \varphi : \mathbb{G}(S/T) & \rightarrow \mathbb{G}(S)/H \\
    [s]_T & \mapsto [s]_H.
\end{align*}

\noindent $\bullet$ $\varphi$ is well-defined: take $s,t \in S$ such that $[s]_T = [t]_T$. Since $s \sim_T t$, there are $x,y,u,v \in T$ such that the products are defined and $x \cdot s \cdot y \leq t$ and $u \cdot t \cdot v \leq s$. But that is the same condition of $s \sim_H t$. Hence $[s]_H = [t]_H$.

\noindent $\bullet$  $\varphi$ is an ordered functor: let $s, t \in T$. If $[s^{-1}s]_T = [tt^{-1}]_T$, then $[s^{-1}s]_H = [tt^{-1}]_H$, so the product is defined in the domain and in the image. Now, $\varphi ([s]_T \cdot [t]_T) = \varphi([s' \cdot a \cdot t]_T) = [s' \cdot a \cdot t]_H = [s]_H \cdot [t]_H$, where $a \in H$ is such that $aa^{-1} \leq s^{-1}s, a^{-1}a = tt^{-1}$ and $s' = (s | aa^{-1})$. 

Now take $s,t \in T$ such that $[s]_T \leq_T [t]_T$. That is, there are $x, y \in T$ such that $x \cdot s \cdot y \leq t$. Thus, since $x, y \in H$, we have that $[s]_H \leq_H [t]_H$. 

\noindent $\bullet$  $\varphi$ is surjective: immediate.

\noindent $\bullet$  $\varphi$ is injective: we just need to show that $\ker \varphi = \{ [s]_T \in \mathbb{G}(S/T) : \varphi([s]_T) = [s]_H \in (\mathbb{G}(S)/H)_0 \} = \mathbb{G}(S/T)_0$. Take $[s]_T \in \ker \varphi$. This implies that $[s]_H$ is idempotent, that is, there is $e \in [s]_H$ such that $e$ is idempotent in $\mathbb{G}(S)$. Hence $[s]_H = [e]_H \Rightarrow [s]_T = [e]_T \in \mathbb{G}(S/T)_0$.

\noindent $\bullet$ $\varphi$ is an inductive functor: The meet-preserving follows from $\varphi$ being an ordered groupoid isomorphism. 

(ii): Consider $T = \mathbb{S}(H)$ and the map
\begin{align*}
    \psi : \mathbb{S}(G/H) & \rightarrow \mathbb{S}(G)/T \\
    [s]_H & \mapsto [s]_T.
\end{align*}

The verification that $\psi$ is a well-defined bijection is similar to (i). It only remains to us to show that $\psi$ is a homomorphism of inverse semigroups. Let $g, h \in G$. We have that
\begin{align*}
    \psi([g]_H \star [h]_H) & = \psi(([g]_H | [e]_H) \cdot ([e]_H | [h]_H)),
\end{align*}
where $[e]_H = [g^{-1}g]_H \wedge [hh^{-1}]$.

Denote by $x$ a representative of the coset $([g]_H | [e]_H)$ and by $y$ a representative of the coset $([e]_H | [h]_H)$. Then
\begin{align*}
    \psi([g]_H \star [h]_H) = \psi([x' \cdot a \cdot b]_H) = [x' \cdot a \cdot y]_T,
\end{align*}
where $a \in T$ is such that $aa^{-1} \leq xx^{-1}$, $a^{-1}a = bb^{-1}$ e $x' = (x | aa^{-1})$. Thus \[
    \psi([g]_H \star [h]_H) = [x]_T \cdot [y]_T = ([g]_T | [e]_T ) \cdot ([e]_T | [h]_T) = [g]_T \star [h]_T. \qedhere \]
\end{proof}

Given a (not necessarily ordered) groupoid $G$ and a normal subgroupoid $H$ of $G$, the congruence $\equiv_H$ which defines the quotient groupoid $G/H$ in \cite{paques2018galois} is given by
\begin{align*}
    a \equiv_H b \Leftrightarrow \exists b^{-1} \cdot a \mbox{ and } b^{-1} \cdot a \in H.
\end{align*}

We have that $a \equiv_H b \Rightarrow a \sim_H b$ in the case of ordered groupoids. In fact, let $a, b \in G$ be such that $a \equiv_H b$. Then $\exists b^{-1} \cdot a$ e $b^{-1} \cdot a \in H$. Hence there is $h \in H$ such that $b^{-1} \cdot a = h$. Therefore
\begin{align*}
    a = r(b) \cdot b \cdot h \mbox{ and } b = h^{-1} \cdot a \cdot d(a),
\end{align*}
where $h, h^{-1}, r(b), d(a) \in H$, since $H$ is a normal subgroupoid, therefore wide. Thus $a \leq r(b) \cdot b \cdot h$ and $b \leq h^{-1} \cdot a \cdot d(a)$, from where $a \sim_H b$. 

However the reverse is not always true. But every groupoid can be seen as an ordered groupoid taking the partial order as equality. In this case, the relations $\equiv_H$ and $\sim_H$ are the same. In fact, let $a, b \in G$ be such that $a \sim_H b$. So there are $\exists x, y, u, v \in H$ such that $\exists x \cdot a \cdot y$, $\exists u \cdot b \cdot v$, $x \cdot a \cdot y = b$ and $u \cdot b \cdot v = a$. Notice that
\begin{align*}
    x \cdot a \cdot y = b & \Rightarrow a \cdot y = x^{-1} \cdot b.
\end{align*}

But $H$ is normal in $G$, from where it follows that $H \cdot b = b \cdot H$. Hence there is $z \in H$ such that $x^{-1} \cdot b = b \cdot z$. Thus
\begin{align*}
    a \cdot y = b \cdot z \Rightarrow b^{-1} \cdot a = z \cdot y^{-1} \in H,
\end{align*}
that is, $a \equiv_H b$. We can now see $\sim_H$ as a generalization of $\equiv_H$. In fact, \cite[Proposition 4.6]{alyamani2018ordered} gives us a condition to $\equiv_H$ and $\sim_H$ be equal: $hh^{-1} = h^{-1}h$, for all $h \in H$. That condition includes well-known types of groupoids, like disjoint unions of groups. 

\begin{defi} \begin{itemize} \item[(i)]An inverse semigroup $S$ such that $ss^{-1} = s^{-1}s$, for all $s \in S$, is said to be a \emph{Clifford inverse semigroup}. \item[(ii)] A groupoid $G$ such that $r(g) = d(g)$, for all $g \in G$, is said to be a \emph{Clifford groupoid}.\end{itemize}
\end{defi}


\begin{prop}
Let $T$ be a Clifford normal inverse semigroup of a inverse semigroup $S$. Then $S/T$ is an inverse semigroup.
\end{prop}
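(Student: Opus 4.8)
The plan is to pass through the inductive groupoid $\mathbb{G}(S)$ and to show that the ordered quotient $\mathbb{G}(S)/\mathbb{G}(T)$ is in fact inductive; since $S/T$ agrees with $\mathbb{G}(S)/\mathbb{G}(T)$ as an ordered groupoid (both quotients are built from the same restricted product and the same relation) and since inductive groupoids are precisely inverse semigroups via the ESN correspondence, this will yield the claim. Write $G = \mathbb{G}(S)$ and $H = \mathbb{G}(T)$. Here $G$ is inductive because $S$ is an inverse semigroup, and $H$ is a normal ordered subgroupoid of $G$ by the correspondence recalled in Section 4.

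First I would exploit the Clifford hypothesis. Since $tt^{-1} = t^{-1}t$ for all $t \in T$, the groupoid $H$ satisfies $r(h) = d(h)$ for all $h \in H$, i.e. $H$ is a Clifford groupoid. By the condition recalled just before the statement (\cite{alyamani2018ordered}), this gives $\equiv_H\, =\, \sim_H$, so the ordered quotient $G/H = G/\!\sim_H$ has exactly the classes of the ordinary groupoid quotient $G/\!\equiv_H$ of \cite{paques2018galois}, which is a groupoid. It then remains only to produce a meet-semilattice structure on its identities.

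The heart of the argument is to identify the identities of $G/H$ together with their order. I would show that $e \mapsto [e]_H$ is an order isomorphism from $(E(S),\leq) = (G_0,\leq)$ onto the identities of $G/H$. Surjectivity is immediate, since every identity of $G/H$ is $[e]_H$ for some $e \in G_0$. For injectivity I use $\equiv_H$: for $e,f \in G_0$ one has $[e]_H = [f]_H$ iff $\exists\, f^{-1}\cdot e$ and $f^{-1}\cdot e \in H$; as $f^{-1}=f$ and the restricted product of two idempotents is defined only when they coincide, this forces $e = f$, so idempotents are never collapsed. For the order I would check that $[g]_H \leq_H [e]_H$ (with $g,e \in G_0$) holds iff $g \leq e$: the nontrivial direction starts from $x\cdot g\cdot y \leq e$ with $x,y\in H$, where the Clifford property forces $r(x)=d(x)=g=r(y)=d(y)$, so that $x\cdot y$ lies in the isotropy group at $g$; being below the identity $e$ it must itself be an identity (identities form an order ideal), hence $x\cdot y = g$ and $g \leq e$, while the converse direction takes $x=y=g\in H$.

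Finally, since $(E(S),\leq)$ is a meet semilattice, the order isomorphism transports this structure to $(G/H)_0$, so $G/H$ is an inductive groupoid and therefore $S/T \cong \mathbb{S}(G/H)$ is an inverse semigroup. The main obstacle is precisely the computation of the order on the identities of the quotient: one must rule out unexpected identifications among idempotents and verify that the induced order is merely the restriction of the order on $E(S)$. This is exactly where the Clifford hypothesis enters, both through the equality $\equiv_H\,=\,\sim_H$ and through the triviality it forces inside each isotropy group.
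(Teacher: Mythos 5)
Your proposal is correct and follows essentially the same route as the paper's own proof: both pass to $G=\mathbb{G}(S)$, $H=\mathbb{G}(T)$, use the Clifford property to show that for identities the quotient order $\leq_H$ coincides with the original order on $G_0$ (forcing elements like $x\cdot g\cdot y$ below an identity to equal $g$ itself), observe that idempotents are not collapsed, and transport the meet-semilattice structure of $E(S)$ to $(G/H)_0$. The only difference is cosmetic: where the paper merely asserts that $\sim_H$ is idempotent separating when $H$ is Clifford, you actually verify it via the equality $\equiv_H\,=\,\sim_H$ from \cite[Proposition 4.6]{alyamani2018ordered}, which is a welcome extra detail.
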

\begin{proof}
We have that $H = \mathbb{G}(T)$ is Clifford in $G = \mathbb{G}(S)$. Notice that if $e, f \in G_0$, then $[e]_H \leq_H [f]_H$ if, and only if, there are $g,h \in H$ such that $\exists g \cdot e \cdot h$ and $g \cdot h = g \cdot e \cdot h \leq f$. Since $G_0$ is an order ideal, that means that $gh \in G_0$. But $H$ is Clifford, so $r(g) = d(g) = e = r(h) = d(h)$, from where it follows that $e = r(g) = r(gh) = gh$. So $e \leq f$. 

Reciprocally, if $e \leq f$ in $G_0$, then $\exists e \cdot e \cdot e$ and $e^3 = e \leq f$ in $G$. So $[e] \leq_H [f]$. This means that the orders $\leq_H$ and $\leq$ coincide in $G_0$ and $(G/H)_0$ when $H$ is Clifford. Therefore, since $\sim_H$ is idempotent separating when $H$ is Clifford, we have that $S/T$ is an inverse semigroup.
\end{proof}

\section{Inverse semigroup Galois theory}

Let $R$ be a ring and $A$ an $R$-algebra. We denote by $A^e = A \otimes_R A^{op}$ the enveloping algebra of $A$. Notice that $A$ is a left $A^e$-module via $(x \otimes y) \cdot a = xay$, for all $a, x, y \in A$. We say that $A$ is a \emph{separable} $R$-algebra if it is a left projective $A^e$-module \cite{chase1969galois}.

In this section we assume that $S$ is a finite inverse semigroup, that $A$ is a commutative $R$-algebra and that $\beta$ is orthogonal. We denote by
\begin{align*}
    A^{\beta} = \{ a \in A | \beta_s(a1_{s^{-1}}) = a1_s, \text{ for all } s \in S \},
\end{align*}
the subalgebra of $A$ fixed by $\beta$. 

Define the trace map $tr_\beta : A \rightarrow A$ given by $tr_\beta(a) = \sum_{s \in S} \beta_s(a1_{s^{-1}})$, for all $a \in A$. This map is $A^{\beta}$-linear. We will prove that $tr_\beta(A) \subseteq A^\beta$.

Let $t \in S$ and $a \in A$. Then
\begin{align*}
    \beta_t(tr_\beta(a)1_{t^{-1}}) & = \beta_t \left ( \sum_{s \in S} \beta_s(a1_{s^{-1}})1_{t^{-1}} \right ) \\ & = \sum_{s \in S} \beta_t(\beta_s(a1_{s^{-1}})1_t^{-1}) \\ & = \sum_{t^{-1}t = ss^{-1}} \beta_{ts}(a1_{(ts)^{-1}})1_t \\ & = \sum_{uu^{-1} = tt^{-1}} \beta_u(a1_u)1_t \\ & = \sum_{u \in S} \beta_u(a1_u)1_t \\ & = tr_\beta(a)1_t,
\end{align*}
since $\beta_s(a1_{s^{-1}})1_{t^{-1}} \in E_{s} \cap E_{t^{-1}} = 0$, if $ss^{-1} \neq t^{-1}t$.

Here the orthogonality was necessary. Without it we could not have guaranteed this fact, since in an arbitrary inverse semigroup $S$, given $t \in S$, $tS \neq S$ in general.

Since $tr_\beta(A) \subseteq A^\beta$ we can consider $A^\beta$ as the codomain of the map $tr_\beta$, that is, we can define $tr_\beta : A \to A^\beta$.

\begin{defi}
We say that $A$ is a $\beta$-Galois extension of $A^{\beta}$ if there are $x_i, y_i \in A$, $1 \leq i \leq n$ such that $\sum\limits_{i=1}^n x_i\beta_s(y_i1_{s^{-1}}) = \delta_{e,s}1_e$, for all $s \in S$, $e \in E(S)$. In this case we say that the set $\{ x_i, y_i\}_{1 \leq i \leq n}$ is a system of $\beta$-Galois coordinates of the extension $A|_{A^{\beta}}$.
\end{defi}

\begin{exe}
Let $A$ be an algebra, $S$ be an inverse semigroup and $\beta$ an orthogonal action of $S$ on $A$ as in Example \ref{exgeral}. We have that $A^\beta = R1_A$ and the extension $A|_{A^\beta}$ is $\beta$-Galois with system of $\beta$-Galois coordinates given by $x_i = y_i = e_i$, for all $1 \leq i \leq nk$.
\end{exe}

\begin{exe} \label{ex1}
Let $G$ be a finite group with identity element 1 and $\alpha = (\{ D_g\}_{g \in G}, \{ \alpha_g : D_{g^{-1}} \rightarrow D_g \}_{g \in G})$ a partial action of $G$ on $A$ \cite[Defition 1.2]{exel1998partial}. Assume that every $D_g$ is unital with identity $1_g$ and that $A$ is a partial $\alpha$-Galois extension of $A^{\alpha}$ in the sense of \cite{dokuchaev2007partial}, that is, exist $x_i, y_i$, $1 \leq i \leq n$ such that
\begin{align*}
    \sum_{i = 1}^{n} x_i\alpha_g(y_i1_{g^{-1}}) = \delta_{1,g}1_A, \text{ for all } g \in G.
\end{align*}

Consider the Exel's inverse semigroup $\mathcal{S}_E(G)$ \cite[Definition 2.1]{exel1998partial}. We know that the partial actions of $G$ on $A$ are in one-to-one correspondence with the actions of $\mathcal{S}_E(G)$ on $A$. Take $\beta$ the action of $\mathcal{S}_E(G)$ on $A$ associated to $\alpha$, that is, given $s = \epsilon_{s_1} \cdots \epsilon_{s_n}[g] \in \mathcal{S}_E(G)$, $E_{s^{-1}} = D_{g^{-1}}D_{g^{-1}s_1} \cdots D_{g^{-1}s_n}$, $E_s = D_gD_{s_1} \cdots D_{s_n}$ and $\beta_s = \alpha_g|_{E_{s^{-1}} \subseteq D_{g^{-1}}}$.

Let's see that $A^{\beta} = A^{\alpha}$. Take $a \in A^{\alpha}$. Then
\begin{align*}
    \alpha_g(a1_g^{-1}) = a1_g, \text{ for all } g \in G.
\end{align*}

Given arbitrary $s = \epsilon_{s_1} \cdots \epsilon_{s_n}[g] \in \mathcal{S}_E(G)$,
\begin{align*}
    \beta_s(a1_{s^{-1}}) & = \alpha_g(a1_{g^{-1}}1_{g^{-1}s_1} \cdots 1_{g^{-1}s_n}) \\ & = \alpha_g(a1_{g^{-1}})\alpha_g(1_{g^{-1}}1_{g^{-1}s_1} \cdots 1_{g^{-1}s_n}) \\ & = a1_g\beta_s(1_{s^{-1}}) \\ & = a1_g1_s = a1_s,
\end{align*}
thus $A^{\alpha} \subseteq A^{\beta}$. Now, take $b \in A^{\beta}$. We have that
\begin{align*}
    \beta_s(b1_{s^{-1}}) = b1_s, \text{ for all } s \in \mathcal{S}_E(G).
\end{align*}

In particular,
\begin{align*}
    \beta_{[g]}(b1_{[g]^{-1}}) = b1_{[g]}, \text{ for all } g \in G.
\end{align*}

But $E_{[g]} = E_g$, $E_{[g]^{-1}} = E_{g^{-1}}$ and $\beta_{[g]} = \alpha_g$. Therefore, $b \in A^{\alpha}$, what allow us to conclude that $A^{\beta} = A^{\alpha}$.

Now we will show that $A$ is a $\beta$-Galois extension of $A^{\beta}$. Consider the $\alpha$-Galois coordinates $x_i, y_i$, $1 \leq i \leq n$, that were presented in the beginning of this example. In this way, with arbitrary $s = \epsilon_{s_1} \cdots \epsilon_{s_n}[g] \in \mathcal{S}_E(G)$,
\begin{align*}
    \sum\limits_{i=1}^{n} x_i\beta_s(y_i1_{s^{-1}}) & = \sum\limits_{i=1}^{n} x_i \alpha_g(y_i1_{g^{-1}}1_{g^{-1}s_1} \cdots 1_{g^{-1}s_n}) \\ & = \sum\limits_{i=1}^{n} x_i \alpha_g(y_i1_{g^{-1}})\alpha_g(1_{g^{-1}}1_{g^{-1}s_1} \cdots 1_{g^{-1}s_n}) \\ & = \left ( \sum\limits_{i=1}^{n} x_i \alpha_g(y_i1_{g^{-1}})  \right ) \alpha_g(1_{g^{-1}}1_{g^{-1}s_1} \cdots 1_{g^{-1}s_n}) \\ & = \delta_{1, g}1_A\beta_s(1_{s^{-1}}) \\ & = \delta_{1,g}1_s.
\end{align*}

Notice now that
\begin{align*}
    \delta_{1,g}1_s = \begin{cases}
    0, \text{ if } s \notin E(\mathcal{S}_E(G)) \\
    1, \text{ if } s \in E(\mathcal{S}_E(G)),
    \end{cases}
\end{align*}
since the idempotents in $\mathcal{S}_E(G)$ are precisely the elements of the form
\begin{align*}
    \epsilon_{s_1} \cdots \epsilon_{s_n}[1] = \epsilon_{s_1} \cdots \epsilon_{s_n}.
\end{align*}

Hence,
\begin{align*}
    \sum\limits_{i=1}^n x_i\beta_s(y_i1_{s^{-1}}) = \delta_{e,s}1_e, \text{ for all } s \in \mathcal{S}_E(G), \text{ } e \in E(\mathcal{S}_E(G)).
\end{align*}

Therefore, every  $\alpha$-partial Galois extension given by a partial action of a group $G$ on an algebra $A$ gives us an example of $\beta$-Galois extension given by the associated action of the Exel's inverse semigroup $\mathcal{S}_E(G)$ on the same algebra $A$. Observe that since $\mathcal{S}_E(G)$ is an inverse monoid, $\beta$ is orthogonal if, and only if, $E_e = \{ 0 \}$, for all $e \in E(\mathcal{S}_E(G))$ such that $e \neq [1_G]$  if, and only if, $D_g = \{ 0 \}$, for all $g \in G$ such that $g \neq 1$.
\end{exe}

\begin{defi}
Let $G$ be an inductive groupoid and $\beta : G \to \mathcal{I}_g(A)$ an orthogonal inductive groupoid action of $G$ on $A$. We say that $A$ is a $\beta$-Galois extension of $A^{\beta}$ if there are $x_i, y_i \in A$, $1 \leq i \leq n$ such that $\sum\limits_{i=1}^n x_i\beta_g(y_i1_{g^{-1}}) = \delta_{e,g}1_e$, for all $g \in G$, $e \in G_0$.
\end{defi}

Since the actions of $\mathbb{G}(S)$ on $A$ are associated with actions of $S$ on $A$ and for an action $\beta : S \to \mathcal{I}_s(A)$, $\beta_s = \mathbb{G}(\beta_s)$, for all $s \in S$, we have the following immediate result. 

\begin{prop}
The functors $\mathbb{S}$ and $\mathbb{G}$ induce a one-to-one correspondence between $\beta$-Galois extensions of inverse semigroups and $\beta$-Galois extensions of inductive groupoids.
\end{prop}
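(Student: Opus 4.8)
The plan is to observe that the two definitions of $\beta$-Galois extension are, after passing through the functors $\mathbb{S}$ and $\mathbb{G}$, literally the same system of equations, so that a system of $\beta$-Galois coordinates for one structure is automatically a system for the other. First I would invoke the Corollary following the ESN Theorem, which already gives a one-to-one correspondence between orthogonal inverse semigroup actions $\beta : S \to \mathcal{I}_s(A)$ and orthogonal inductive groupoid actions $\mathbb{G}(\beta) : \mathbb{G}(S) \to \mathcal{I}_g(A)$, both written simply as $\beta$ by the stated convention. The structural facts I would isolate are that $\mathbb{G}$ leaves the underlying set unchanged (so $S$ and $\mathbb{G}(S)$ consist of the same elements), that the idempotents $E(S)$ coincide with the identities $\mathbb{G}(S)_0$, and that for each $s$ the isomorphism $\beta_s$ together with the central unit $1_s = 1_{ss^{-1}}$ is unaffected by the passage, since $\beta_s = \mathbb{G}(\beta_s)$.

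With these identifications in hand I would compare the two conditions directly. The fixed subalgebra $A^{\beta}$ is defined in both settings by $\beta_s(a1_{s^{-1}}) = a1_s$ ranging over the common underlying set, hence is the same subalgebra in either interpretation. For the Galois coordinates, the inverse semigroup requires $\sum_{i=1}^n x_i\beta_s(y_i1_{s^{-1}}) = \delta_{e,s}1_e$ for all $s \in S$ and $e \in E(S)$, while the inductive groupoid requires the same sum to equal $\delta_{e,g}1_e$ for all $g \in \mathbb{G}(S)$ and $e \in \mathbb{G}(S)_0$. Because the set over which $s$ (resp. $g$) ranges is identical, the identities over which $e$ ranges are identical, each $\beta_s$ and each $1_s$ agree, and the Kronecker symbol depends only on equality of elements of the common underlying set, the two families of equations coincide verbatim.

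It follows that a tuple $\{x_i, y_i\}_{1 \le i \le n}$ is a system of $\beta$-Galois coordinates for $A|_{A^{\beta}}$ regarded as an inverse semigroup extension if and only if it is one regarded as an inductive groupoid extension; equivalently, $A|_{A^{\beta}}$ is $\beta$-Galois over $S$ exactly when it is $\beta$-Galois over $\mathbb{G}(S)$. Combined with the already-established bijection between the two kinds of action, this delivers the claimed correspondence, with $\mathbb{S}$ and $\mathbb{G}$ as mutually inverse assignments. There is no genuine obstacle here; the only point demanding care is to record that the defining equation refers solely to the individual isomorphisms $\beta_s$ and the units $1_s$, and never to the semigroup product or the restricted product/pseudoproduct, which is precisely why the Galois condition is insensitive to whether $\beta$ is viewed as an action of $S$ or of $\mathbb{G}(S)$.
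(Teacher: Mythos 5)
Your proposal is correct and follows essentially the same route as the paper, which presents the result as immediate from the facts that $\beta_s = \mathbb{G}(\beta_s)$ for all $s \in S$ and that actions of $S$ correspond to actions of $\mathbb{G}(S)$; your write-up simply makes explicit the identification of underlying sets, idempotents/identities, units $1_s$, and the verbatim coincidence of the Galois-coordinate equations that the paper leaves implicit.
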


With these considerations, we inherit the following result from the Galois theory for groupoid actions \cite{bagio2012partial}, which characterize Galois extensions.

\begin{theorem}
Let $S$ be an inverse semigroup and $\beta = (\{ E_s \}_{s \in S}, \{ \beta_s : E_{s^{-1}} \rightarrow E_s \}_{s \in S})$ an action of $S$ on $A = \bigoplus\limits_{e \in E(S)} E_e$ where every $E_s$ is unital. The following statements are equivalent:

\emph{(i)} $A$ is a $\beta$-Galois extension of $A^{\beta}$.

\emph{(ii)} $A$ is a projective finitely generated $A^{\beta}$-module and the map $j : A \ltimes_\beta S \rightarrow End_{R^{\beta}}(R)$ given by $j \left ( \sum\limits_{s \in S} a_su_s \right )(x) = \sum\limits_{s \in S} a_s\beta_s(x1_{s^{-1}})$ is an isomorphism of $A$-modules and of $A^{\beta}$-algebras.

\emph{(iii)} For all left $A \ltimes_\beta S$-modules $M$, the map $\mu : A \otimes_{A^{\beta}} M^S \rightarrow M^S$ given by $\mu(a \otimes m) = am$ is an isomorphism of $A$-modules.

\emph{(iv)} The map $\phi : A \otimes_{A^{\beta}} A \rightarrow \prod\limits_{s \in S} E_s$ given by $\phi(a \otimes b) = (a\beta_s(b1_{s^{-1}}))_{s \in S}$ is an isomorphism of $A$-modules.

\emph{(v)} $AtA = A \ltimes_\beta S$, where $t = \sum\limits_{s \in S} 1_su_s$.

\emph{(vi)} The map $\tau' : A \otimes_{A^{\beta}} A \rightarrow A \ltimes_{\beta} S$ given by $\tau'(a \otimes b) = \sum\limits_{s \in S} a\beta_s(b1_{s^{-1}})u_s$ is surjective.

\emph{(vii)} $A$ is a generator of the category of the left $A \ltimes_\beta S$-modules.
\end{theorem}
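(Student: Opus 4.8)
The plan is to prove the theorem by transporting the whole statement, condition by condition, to the associated inductive groupoid $G = \mathbb{G}(S)$ and then invoking the groupoid Galois correspondence of \cite{bagio2012partial}. The key observation is that $G$ and $S$ share the same underlying set, that $E(S) = G_0$, and that, once the functors $\mathbb{S}$ and $\mathbb{G}$ are suppressed, $\beta_s$ is identified with $\beta_g$ for the corresponding element; hence every sum $\sum_{s \in S}$ is literally $\sum_{g \in G}$ and every statement indexed by $E(S)$ is indexed by $G_0$. Under the orthogonality hypothesis $A = \bigoplus_{e \in E(S)} E_e = \bigoplus_{e \in G_0} E_e$, the action $\beta : G \to \mathcal{I}_g(A)$ is precisely an orthogonal inductive groupoid action satisfying the standing hypotheses of the groupoid theory in \cite{bagio2012partial}, where $A$ is commutative, $G$ is finite, and each $E_g$ is unital.

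First I would record the dictionary that makes the two sides coincide. The fixed subalgebra $A^{\beta} = \{ a : \beta_s(a1_{s^{-1}}) = a1_s \text{ for all } s \in S \}$ equals the groupoid fixed ring verbatim, since the defining condition ranges over the same set of maps; likewise the trace map $tr_\beta$ is unchanged. By Corollary \ref{corolarioprodutoscruzados} the natural map is an isomorphism $A \ltimes_\beta S \simeq A \ltimes_\beta \mathbb{G}(S)$ of $R$-algebras, so left $A \ltimes_\beta S$-modules and left $A \ltimes_\beta G$-modules coincide, and so do their submodules of invariants, giving $M^S = M^G$. The element $t = \sum_{s \in S} 1_s u_s$ is carried to the corresponding element of $A \ltimes_\beta G$, and each of the structural maps $j$, $\mu$, $\phi$, $\tau'$ is defined by the same formula on both sides and is therefore intertwined by these identifications.

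With the dictionary in place, each of (i)--(vii) becomes exactly its groupoid counterpart for the finite groupoid $G$ with its orthogonal action $\beta$: statement (i) matches the groupoid Galois condition through the proposition immediately preceding this theorem, which identifies $\beta$-Galois extensions of $S$ with those of $\mathbb{G}(S)$; statements (ii), (iv), (vi) match because the maps are given by identical formulas once $A \ltimes_\beta S$ is replaced by $A \ltimes_\beta \mathbb{G}(S)$; and statements (iii), (v), (vii) match because they are purely module-theoretic over the isomorphic skew rings. Applying the groupoid equivalence theorem of \cite{bagio2012partial} to $G = \mathbb{G}(S)$ then yields the equivalence of all seven conditions for $S$.

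The main obstacle I anticipate is not conceptual but a matter of bookkeeping: one must check carefully that each structural map transports \emph{as stated}, i.e.\ that the isomorphism $A \ltimes_\beta S \simeq A \ltimes_\beta \mathbb{G}(S)$ of Corollary \ref{corolarioprodutoscruzados} is compatible with the module actions used in (iii), (v), (vii) and that the invariants $M^S$ really do correspond to $M^{\mathbb{G}(S)}$ under it, as well as confirming that the orthogonality hypothesis is exactly the form of the hypothesis used in \cite{bagio2012partial} (in particular that $1_A = \sum_{e \in G_0} 1_e$). Once these compatibilities are verified, no separate computation is needed and the theorem follows by citation.
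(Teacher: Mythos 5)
Your proposal is correct and takes essentially the same route as the paper: transport everything to $\mathbb{G}(S)$ via the ESN correspondence, use Corollary \ref{corolarioprodutoscruzados} to identify $A \ltimes_\beta S$ with $A \ltimes_\beta \mathbb{G}(S)$, note that $A$, $A^{\beta}$, $E_s$ and $\beta_s$ are unchanged under the transition, and cite the groupoid equivalence theorem of \cite{bagio2012partial}. The paper's proof is just a terser version of your argument, asserting the compatibilities you propose to verify.
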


\begin{proof}
We have that $\mathbb{G}(\beta)$ is an action of $\mathbb{G}(S)$ on $A$, and the extension $A|_{A^{\beta}}$ is $\mathbb{G}(\beta)$-Galois with respect to $\mathbb{G}(S)$. By Corollary \ref{corolarioprodutoscruzados}, $A \ltimes_\beta S \simeq A \ltimes_\beta \mathbb{G}(S)$. Besides that, $A$, $A^{\beta}$, $E_s$ and $\beta_s$ are the same through the transition of $\beta$ to $\mathbb{G}(\beta)$, for all $s \in S$. The result now follows from \cite[Theorem 5.3]{bagio2012partial}.
\end{proof}

Also as a consequence of \cite{bagio2012partial}, we can obtain the following remarks.

\begin{obs}
Since $A$ is a commutative ring, we have that $tr_\beta(A) = A^{\beta}$. In this case notice that

(i) Exists $c \in A$ such that $tr_\beta(c) = 1_A$.

(ii) $A^{\beta}$ is isomorphic to a direct summand of $A$ as $A^{\beta}$-module. In fact, $tr_\beta$ is a surjective map, hence $A \simeq A^{\beta} \oplus \ker (tr_\beta)$.
\end{obs}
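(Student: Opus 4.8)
The plan is to deduce all three assertions from the corresponding statements for commutative groupoid Galois extensions in \cite{bagio2012partial}, transported along the correspondence built in the previous sections. Concretely, I would pass to the associated action $\mathbb{G}(\beta)$ of $\mathbb{G}(S)$ on $A$; by the characterization theorem above together with Corollary \ref{corolarioprodutoscruzados}, the extension $A|_{A^{\beta}}$ is $\mathbb{G}(\beta)$-Galois, and the data $A$, $A^{\beta}$, $\{E_s\}$, $\{\beta_s\}$ are literally unchanged in the passage from $\beta$ to $\mathbb{G}(\beta)$. The decisive observation is that the inverse semigroup trace and the groupoid trace agree: both are $a \mapsto \sum_{s} \beta_s(a 1_{s^{-1}})$, the sum running over the common underlying set $S = \mathbb{G}(S)$ with the same isomorphisms $\beta_s$. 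Hence the commutative-case remarks attached to the groupoid Galois theorem in \cite{bagio2012partial} apply verbatim and yield $tr_\beta(A) = A^{\beta}$.

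The heart of the matter is the surjectivity $tr_\beta(A) = A^{\beta}$; the inclusion $\subseteq$ was already verified above, and everything else is formal once $\supseteq$ is in hand. I would first record that, for commutative $A$, surjectivity is equivalent to the existence of an element of trace $1_A$, which also disposes of item (i). Indeed, $1_A \in A^{\beta}$, so surjectivity produces $c \in A$ with $tr_\beta(c) = 1_A$; conversely, given such a $c$ and any $b \in A^{\beta}$, using that each $\beta_s$ is a ring homomorphism, that $\beta_s(b1_{s^{-1}}) = b1_s$, and that $A$ is commutative,
\begin{align*}
    tr_\beta(cb) &= \sum_{s \in S} \beta_s(c1_{s^{-1}})\beta_s(b1_{s^{-1}}) = b\sum_{s \in S}\beta_s(c1_{s^{-1}})1_s = b\, tr_\beta(c) = b,
\end{align*}
so every $b \in A^{\beta}$ is a trace. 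Thus item (i) and surjectivity are two faces of the same fact, and its only genuine input — that the trace of a commutative Galois extension is onto — is imported from \cite{bagio2012partial}.

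For item (ii) I would exhibit an explicit $A^{\beta}$-linear splitting of $tr_\beta$. Fix $c$ with $tr_\beta(c) = 1_A$ and define $\sigma : A^{\beta} \to A$ by $\sigma(b) = cb$. Commutativity makes $\sigma$ an $A^{\beta}$-module map, and the computation above gives $tr_\beta \circ \sigma = \mathrm{id}_{A^{\beta}}$. Since $tr_\beta$ is $A^{\beta}$-linear and surjective and admits the section $\sigma$, the short exact sequence $0 \to \ker(tr_\beta) \to A \xrightarrow{tr_\beta} A^{\beta} \to 0$ splits, whence $A \simeq A^{\beta} \oplus \ker(tr_\beta)$ as $A^{\beta}$-modules.

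The step I expect to be the main obstacle is not any of the module-theoretic manipulations, which are routine, but the careful bookkeeping needed to justify the transfer: one must check that orthogonality is precisely what forces the groupoid trace (summed over $\mathbb{G}(S)$) to coincide with $tr_\beta$ and that the fixed subalgebra and the unital idempotents $1_s$ are preserved, so that the commutative remarks of \cite{bagio2012partial} really do apply without modification. If instead one wanted a self-contained proof of surjectivity directly from a system of $\beta$-Galois coordinates $\{x_i, y_i\}$, the difficulty would shift to reconstructing the separability argument, since the naive candidates (such as $\sum_i x_i y_i$, which only yields $\sum_i x_i\, tr_\beta(y_i) = 1_A$) do not by themselves produce a single element of trace $1_A$.
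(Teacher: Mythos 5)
Your proposal is correct and takes essentially the same route as the paper: the paper also obtains this remark by transporting the commutative-case consequences of \cite{bagio2012partial} along the correspondence $\beta \leftrightarrow \mathbb{G}(\beta)$, using that $A$, $A^{\beta}$, the ideals $E_s$, the maps $\beta_s$, and hence the trace are literally unchanged under the passage to $\mathbb{G}(S)$. Your explicit verification that surjectivity of $tr_\beta$ is equivalent to the existence of an element of trace $1_A$, and your section $\sigma(b)=cb$ splitting $0 \to \ker(tr_\beta) \to A \to A^{\beta} \to 0$, are correct fillings-in of details the paper leaves implicit.
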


Let $B$ be a $A^{\beta}$-subalgebra of $A$. Define $$T_B = \{ s \in S : \beta_s(b1_{s^{-1}}) = b1_s, \text{ for all } b \in B \}.$$ 

Analogously, if $\beta$ is an orthogonal action of a finite groupoid $G$ on $A$ and $B$ is a $A^\beta$-subalgebra of $A$, define
\begin{align} \label{hb}
    H_B = \{ g \in G : \beta_g(b1_{g^{-1}}) = b1_g, \text{ for all } b \in B \}.
\end{align}

\begin{defi}
We say that a subring $B$ of $A$ is $\beta$-strong if for all $s,t \in S$ with $ss^{-1} = tt^{-1}$, $s^{-1}t \notin T_B$ and for any nonzero idempotent $e \in E_s = E_t$, there is an element $b \in B$ such that $\beta_s(b1_{s^{-1}})e \neq \beta_t(b1_{s^{-1}})e$.
\end{defi}

We will begin to construct our way to the Galois correspondence theorem. 


\begin{lemma} \label{lemacompatibilidade}
Given an inverse semigroup $S$ and $\beta$ an orthogonal action of $S$ on $A$, assume that $A$ is $\beta$-Galois over $A^{\beta}$. Let $B$ be a $A^\beta$-subalgebra of $A$. Let $T$ be an inverse subsemigroup of $S$ and $A_T = \bigoplus_{e \in E(T)} E_e$. Define $\beta_T = \{ \beta_t : E_{t^{-1}} \rightarrow E_t, t \in T \}$. Then, denoting by $H = \mathbb{G}(T)$, $A_H = \bigoplus_{e \in H_0} E_e$ and $\beta_H = \{ \beta_h : E_{h^{-1}} \to E_h, h \in H \}$ as in \cite[Theorem 4.1]{paques2018galois}, we have that

\emph{(i)} $A_T = A_H$;

\emph{(ii)} $\beta_T = \mathbb{S}(\beta_H)$;

\emph{(iii)} $T_B = \mathbb{S}(H_B)$.
\end{lemma}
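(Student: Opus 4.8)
The plan is to reduce all three equalities to the single structural feature of the ESN correspondence that has been emphasized throughout the paper: the functors $\mathbb{S}$ and $\mathbb{G}$ leave underlying sets untouched and leave each component isomorphism $\beta_s$ untouched, altering only the multiplication and the partial order. Concretely, $\mathbb{G}(T)$ has $T$ as its underlying set, $\mathbb{S}(H)$ has $H$ as its underlying set, and for the action one has $\beta_s = \mathbb{G}(\beta)_s$ for every $s$, as recorded after the corollary identifying inverse semigroup actions with inductive groupoid actions. I would state this once and then read off each part.

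For (i), I would recall that the identities of the groupoid $\mathbb{G}(T)$ are exactly the idempotents of $T$: for idempotent $e$ one has $d(e)=e^{-1}e=e$ and $r(e)=ee^{-1}=e$, and conversely any groupoid identity $g=d(g)=g^{-1}g$ is idempotent. Hence $H_0 = E(T)$ as subsets of the common underlying set. Since $A_T = \bigoplus_{e \in E(T)} E_e$ and $A_H = \bigoplus_{e \in H_0} E_e$ are direct sums of the same family $\{E_e\}$ over the same index set, they coincide.

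For (ii), I would invoke the definition $\mathbb{S}(\varphi)(s) = \varphi(s)$ together with the fact that the groupoid action $\beta_H$ used in \cite[Theorem 4.1]{paques2018galois} is the restriction of $\mathbb{G}(\beta)$ to $H$, i.e. the family $\{\beta_h : E_{h^{-1}} \to E_h\}_{h \in H}$. Because $H = T$ as sets and $\mathbb{G}(\beta)_s = \beta_s$, the functor $\mathbb{S}$, which does not modify the component maps, sends $\beta_H$ to $\{\beta_t : E_{t^{-1}} \to E_t\}_{t \in T} = \beta_T$, giving $\beta_T = \mathbb{S}(\beta_H)$. For (iii) I would observe that the defining condition ``$\beta_s(b1_{s^{-1}}) = b1_s$ for all $b \in B$'' is literally the same whether $s$ is read in $S$ or in $\mathbb{G}(S)$, since the maps $\beta_s$ agree; thus $T_B$ and $H_B$ are one and the same subset of the common underlying set, and applying $\mathbb{S}$ returns the inverse semigroup carried by that set, namely $T_B$.

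The only step requiring genuine care is (iii): before writing $\mathbb{S}(H_B)$ one must check that $H_B$ is actually a wide ordered subgroupoid of $\mathbb{G}(S)$ and that $T_B$ is a full inverse subsemigroup of $S$, so that the comparison is between objects of the correct type and the functor $\mathbb{S}$ is legitimately applicable; this is where I expect the main (though still routine) verification to lie, using that $B \supseteq A^{\beta}$ forces every idempotent of $S$ into $T_B$ and that the fixing condition is stable under the restricted product and inverses. The remaining content of the lemma is pure bookkeeping about the functors not touching underlying sets or the maps $\beta_s$.
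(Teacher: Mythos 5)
Your proposal is correct and follows essentially the same route as the paper: the paper's proof of (i) is exactly the observation $E(T)=H_0$, and its proofs of (ii) and (iii) are the same bookkeeping that $\mathbb{S}$ leaves underlying sets and the component maps $\beta_s$ untouched, so the two families of maps and the two fixed-point sets coincide verbatim. The well-typedness concern you raise for (iii) — that $T_B$ is a full inverse subsemigroup and $H_B$ a wide ordered subgroupoid — is not needed inside the lemma's proof in the paper; it is established separately in the proposition immediately following it.
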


\begin{proof}
(i): It is enough to notice that $E(T) = H_0$.

(ii): $\beta_T = \{ \beta_t : E_{t^{-1}} \rightarrow E_t : t \in T \} = \{ \mathbb{S}(\beta_h) : E_{h^{-1}} \rightarrow E_h : h \in H \} = \mathbb{S}(\beta_H)$. Notice that the compatibility of the actions follows from the discussion in section 3.

(iii): $T_B = \{ s \in S : \beta_s(b1_{s^{-1}}) = b1_s, \text{ for all } b \in B \} = \mathbb{S}(\{ g \in \mathbb{G}(S) : \beta_g(b1_{g^{-1}}) = b1_t, \text{ for all } b \in B \}) = \mathbb{S}(H_B)$.
\end{proof}

\begin{prop}
The set $T_B$ is a full inverse subsemigroup of $S$, for all $A^{\beta}$-subalgebra $B$ of $A$.
\end{prop}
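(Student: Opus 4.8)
The plan is to verify directly the two requirements in the definition: that $T_B$ is closed under the semigroup product and under inversion, so that it is an inverse subsemigroup of $S$, and that it contains every idempotent of $S$, so that it is full. One might hope to shortcut this by invoking Lemma \ref{lemacompatibilidade}(iii), which identifies $T_B = \mathbb{S}(H_B)$, together with the ESN correspondence between full inverse subsemigroups of $S$ and wide ordered subgroupoids of $\mathbb{G}(S)$ recorded in Section 4; however that route would still require checking that $H_B$ is an \emph{ordered} subgroupoid with respect to the natural partial order, so I prefer the direct computation, which also makes the role of orthogonality transparent.

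First the easy facts. For every $e \in E(S)$ we have $e^{-1} = e$, and by Proposition \ref{propconsacoes}(iv) $\beta_e = \mathrm{Id}_{E_e}$, so $\beta_e(b1_{e^{-1}}) = b1_e$ for all $b \in B$; hence $E(S) \subseteq T_B$. For closure under inversion, let $s \in T_B$, so $\beta_s(b1_{s^{-1}}) = b1_s$ for every $b \in B$. Applying $\beta_{s^{-1}} = \beta_s^{-1}$ (Proposition \ref{propconsacoes}(ii)) to both sides and noting $b1_s \in E_s$ gives $b1_{s^{-1}} = \beta_{s^{-1}}(b1_s)$, which is exactly the condition placing $s^{-1}$ in $T_B$.

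The main step is closure under products, and here orthogonality is essential. Given $s,t \in T_B$, I would first treat the case $s^{-1}s \neq tt^{-1}$: by the Remark following the definition of orthogonal action, $E_{st} = 0$ in this situation, so $1_{st} = 1_{(st)^{-1}} = 0$ and both sides of the membership equation for $st$ vanish. When instead $s^{-1}s = tt^{-1}$, I would record the identities that orthogonality forces: from $(st)^{-1}(st) = t^{-1}s^{-1}st = t^{-1}t$ one gets $E_{(st)^{-1}} = E_{t^{-1}}$, from $st(st)^{-1} = ss^{-1}$ one gets $E_{st} = E_s$, and $1_t = 1_{tt^{-1}} = 1_{s^{-1}s} = 1_{s^{-1}}$. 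Then the chain
\[
\beta_{st}(b1_{(st)^{-1}}) = \beta_s\bigl(\beta_t(b1_{t^{-1}})\bigr) = \beta_s(b1_t) = \beta_s(b1_{s^{-1}}) = b1_s = b1_{st}
\]
uses property (i) of the action (after rewriting $1_{(st)^{-1}} = 1_{t^{-1}}$), then $t \in T_B$, then $1_t = 1_{s^{-1}}$, then $s \in T_B$, and finally $1_s = 1_{st}$, proving $st \in T_B$.

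I expect the only real obstacle to be the bookkeeping in this last chain: one must confirm the domain inclusions (in particular $b1_{(st)^{-1}} \in E_{t^{-1}}$, so that $\beta_{st} = \beta_s \circ \beta_t$ legitimately applies) and keep track of the unit idempotents $1_{(st)^{-1}}, 1_t, 1_{s^{-1}}, 1_s, 1_{st}$ so that each equality is correctly typed. Combining the three closures, $T_B$ is an inverse subsemigroup with $E(T_B) = T_B \cap E(S) = E(S)$, hence full. Orthogonality is precisely what collapses those idempotents onto one another and what trivializes the $s^{-1}s \neq tt^{-1}$ case; without it the product need not close, mirroring the phenomenon already flagged in the trace computation earlier in this section.
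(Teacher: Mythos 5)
Your proof is correct, but it takes a genuinely different route from the paper's. You verify the inverse-subsemigroup axioms directly at the semigroup level: $E(S) \subseteq T_B$ via Proposition \ref{propconsacoes}(iv), closure under inversion via $\beta_{s^{-1}} = \beta_s^{-1}$, and closure under products by the case split $s^{-1}s \neq tt^{-1}$ (where orthogonality forces $E_{st} = 0$, making the condition vacuous) versus $s^{-1}s = tt^{-1}$ (where $E_u = E_{uu^{-1}}$ identifies all the relevant unit idempotents and your chain of equalities goes through; the domain checks you flag do hold, since $E_t = E_{tt^{-1}} = E_{s^{-1}s} = E_{s^{-1}}$ in that case). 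The paper instead stays inside its groupoid framework: it quotes from \cite{paques2018galois} that $H_B$ is a wide subgroupoid of $\mathbb{G}(S)$, uses orthogonality only for the one-line check that $H_B$ is an \emph{ordered} subgroupoid (if $y < x$ then $E_y = 0$, so $\beta_y$ is the zero map and $y \in H_B$ trivially — precisely the step you chose to avoid), and then invokes the Section 4 correspondence between wide ordered subgroupoids and full inverse subsemigroups. Your version is self-contained and independent of the groupoid Galois theory; the paper's is shorter, consistent with its overall strategy of transporting everything through ESN, and yields as an explicit byproduct the inclusion $(S \setminus \max S) \subseteq T_B$, which is needed later in the correspondence theorems. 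One caveat on your closing remark: it is not true that the product "need not close" without orthogonality. Since an action is exactly a homomorphism $\beta : S \to \mathcal{I}_s(A)$, one has $\beta_{st} = \beta_s \circ \beta_t$ as partial isomorphisms, and the computation $\beta_t(b1_{(st)^{-1}}) = \beta_t(b1_{t^{-1}})\beta_t(1_{(st)^{-1}}) = b\,1_{s^{-1}}1_t$, followed by $\beta_s(b\,1_{s^{-1}}1_t) = b1_s 1_{st} = b1_{st}$, shows that $T_B$ is a full inverse subsemigroup for \emph{any} action with unital ideals; orthogonality is what makes your two-case bookkeeping possible, not what makes the proposition true. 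This does not affect the correctness of your proof under the standing hypotheses of Section 5.
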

\begin{proof}
We already know that $H_B = \mathbb{G}(T_B)$ is a wide subgroupoid of $\mathbb{G}(S)$. We just need to show that $H_B$ is ordered. Let $x \in H_B$ and $y < x$. Then $\beta_y = \beta_x|_{E_{y^{-1}}} = 0$. So $y \in H_B$. So $H_B$ is a wide ordered subgroupoid of $\mathbb{G}(S)$, from where it follows that $T_B$ is a full inverse subsemigroup of $S$. In particular, this result shows that $(S \setminus \max S) \subseteq T_B$, for all $B$.
\end{proof}

In the groupoid Galois correspondence \cite{paques2018galois} we have the additional condition that the ideals $E_g$ are nonzero, for all  $g \in G$. This motivates the following result.

\begin{prop}
Let $G$ be a groupoid. The following statements are equivalent:
\begin{enumerate}
    \item[(i)] $\max G$ is a subgroupoid of $G$;
    
    \item[(ii)] If $g \in \max G$, then $r(g) \in \max G$;
    
    \item[(iii)] $\max G = \{ g \in G : r(g) \in \max (G_0) \}$.
\end{enumerate}
\end{prop}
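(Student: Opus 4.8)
The plan is to extract a single structural lemma about the order on $G$ and then read off all three implications from it with essentially no further computation. (Here $\max$ refers throughout to the ambient partial order on the ordered groupoid $G$.)

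First I would prove a \emph{rigidity lemma}: if $g \le h$ with $d(g) = d(h)$, then $g = h$, and symmetrically $r(g) = r(h)$ forces $g = h$. This is immediate from the uniqueness clause in (OG3): writing $e = d(g) = d(h)$, both $g$ and $h$ are elements that are $\le h$ and have domain $e$, so each equals the unique restriction $(h \mid e)$; the range version uses (OG3*). Next, from (OG1) and (OG2) one checks that $g \le h$ implies $d(g) \le d(h)$ and $r(g) \le r(h)$ (apply (OG2) to $g^{-1} \le h^{-1}$ and $g \le h$). Combining with rigidity upgrades this to the \emph{strict monotonicity} I will use repeatedly:
\begin{align*}
g < h \ \Longrightarrow\ d(g) < d(h) \ \text{ and } \ r(g) < r(h).
\end{align*}
I also record two cheap facts: inversion reverses $\le$ by (OG1), so $g \in \max G \Leftrightarrow g^{-1} \in \max G$; and $r(gh) = r(g)$ whenever $\exists gh$, since then $h h^{-1} = r(h) = d(g) = g^{-1}g$.

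The conceptual heart is the identity $\max(G_0) = \max G \cap G_0$. The inclusion $\subseteq$ is immediate, and for $\supseteq$ one uses strict monotonicity: if $e \in G_0$ is maximal in $G_0$ but $e < h$ for some $h$, then $e = r(e) < r(h) \in G_0$, a contradiction, so $e \in \max G$. Since $r(g) \in G_0$ for every $g$, this identity makes the conditions ``$r(g) \in \max(G_0)$'' and ``$r(g) \in \max G$'' literally the same, which is what turns (ii) into a statement about $\max(G_0)$. Moreover the inclusion $\{g : r(g) \in \max(G_0)\} \subseteq \max G$ holds unconditionally (if $g < h$ then $r(g) < r(h) \in G_0$ contradicts $r(g)$ maximal in $G_0$), so the set equality in (iii) reduces to $\max G \subseteq \{g : r(g) \in \max(G_0)\}$, i.e. to $g \in \max G \Rightarrow r(g) \in \max(G_0)$ — which, by the identity, is exactly (ii). Hence (ii) $\Leftrightarrow$ (iii).

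It remains to connect these with (i). For (i) $\Rightarrow$ (ii): a subgroupoid is closed under $g \mapsto g g^{-1} = r(g)$, so if $\max G$ is a subgroupoid and $g \in \max G$, then $r(g) \in \max G$. For (ii) $\Rightarrow$ (i): closure of $\max G$ under inverses is automatic from (OG1), and identities are captured once products are, so I only need $g, h \in \max G$ with $\exists gh$ to force $gh \in \max G$. If instead $gh < k$ for some $k$, strict monotonicity gives $r(gh) < r(k)$; but $r(gh) = r(g)$, which lies in $\max G$ by (ii), so $r(g) < r(k)$ is impossible. Thus $gh \in \max G$ and $\max G$ is a subgroupoid. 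I expect the only genuine subtlety to be the rigidity lemma and its upgrade to strict monotonicity of $d$ and $r$; once that is secured, the identity $\max(G_0) = \max G \cap G_0$ collapses (ii) and (iii), and the observation $r(gh) = r(g)$ settles the product closure for (i), so all three equivalences follow with no case analysis.
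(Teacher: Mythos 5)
Your proof is correct and follows essentially the same route as the paper's: both hinge on the unconditional inclusion $\{ g \in G : r(g) \in \max (G_0) \} \subseteq \max G$, proved from monotonicity of $r$ under $\leq$ together with the uniqueness of restrictions (your rigidity lemma, which is exactly the paper's step $g = (r(g)\,|\,k) = k$), and both settle (ii) $\Rightarrow$ (i) by closure under inverses via (OG1) and closure under products via $r(gh) = r(g)$. One cosmetic slip: in your identity $\max(G_0) = \max G \cap G_0$ the two inclusion labels are interchanged --- the immediate direction is $\max G \cap G_0 \subseteq \max(G_0)$, while your strict-monotonicity argument establishes the reverse --- but both arguments are present and correct, so this does not affect the proof.
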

\begin{proof}
First notice that if $g \in G$ is such that $r(g) \in \max G_0$, then $g \in \max G$. In fact, if there is $k \in G$ such that $g \leq k$, then $r(g) \leq r(k)$. Since $r(g) \in \max G$, $r(g) = r(k)$. By the uniqueness of restrictions, $g = (r(g) | k) = k$. That guarantee that it always hold $ \{ g \in G : r(g) \in \max (G_0) \} \subseteq \max G$.

It is straightfoward that (i) $\Rightarrow $ (ii). For (ii) $\Rightarrow $ (i), let $g \in \max G$. Suppose that there is $k \in G$ such that $g^{-1} < k$. Then $g < k^{-1}$, which is a contradiction. If $g,h \in \max G$ are such that $\exists gh$, we have that $r(g) = r(gh) \in \max G$, so $gh \in \max G$ by the first observation of this proof. Thus $\max G$ is a groupoid. 

We have clearly that (iii) $\Rightarrow $ (ii). For the reverse implication, assume that $g \in \max G$. Hence, by (ii), $r(g) \in \max G$. In particular, $r(g) \in \max (G_0)$.
\end{proof}

Fix $G$ be a finite ordered groupoid and $\beta$ an orthogonal ordered groupoid action of $G$ on $A$ where $E_g \neq 0$, for all $g \in \max G$. Then we have that $E_{r(g)} = E_g \neq 0$, for all $g \in \max G$. But this implies that $r(g) \in \max G$, since otherwise $E_{r(g)} = 0$. So (ii) of the proposition above holds. Therefore $\max G$ is a groupoid. 

Denote by $\mathsf{SubGr(Max G)}$ the set of all wide subgroupoids of $\max G$ and $\mathsf{SubGr(G)}$ be the set of all wide subgroupoids of $G$ that contain $(G\setminus \max G)$. 

\begin{lemma} \label{lema511}
In the above conditions, there is a bijective correspondence between $\mathsf{SubGr(Max G)}$ and $\mathsf{SubGr(G)}$.
\end{lemma}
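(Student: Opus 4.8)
The plan is to exhibit the correspondence explicitly by the two mutually inverse assignments
\[
\Phi(K) = K \cup (G \setminus \max G), \qquad \Psi(H) = H \cap \max G,
\]
where $K$ ranges over $\mathsf{SubGr(Max\,G)}$ and $H$ over $\mathsf{SubGr(G)}$, and then to check that each lands in the prescribed set and that $\Psi \circ \Phi$ and $\Phi \circ \Psi$ are the respective identity maps.

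First I would isolate the structural fact that carries the whole argument. Since $E_g \neq 0$ for every $g \in \max G$, condition (ii) of the preceding proposition holds, so I may use the characterization $\max G = \{ g \in G : r(g) \in \max(G_0) \}$. Because $g$ and $g^{-1}$ are maximal together by (OG1) and $r(g^{-1}) = d(g)$, an element $g$ is maximal if and only if $r(g) \in \max(G_0)$ if and only if $d(g) \in \max(G_0)$. Consequently, whenever $\exists gh$ (so that $d(g) = r(h)$), the elements $g$, $h$ and $gh$ are maximal simultaneously; in particular no composable pair can mix a maximal with a non-maximal factor. Moreover $G \setminus \max G$ is an order ideal: if $h \leq g$ then $r(h) \leq r(g)$, so were $h$ maximal we would force $r(h) = r(g) \in \max(G_0)$, contradicting $g \notin \max G$. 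Thus both $\max G$ and $G \setminus \max G$ are closed under inverses and under all defined products, and $G \setminus \max G$ is downward closed.

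With this in hand the verifications are routine. For $\Phi(K)$: it contains $G \setminus \max G$ by construction and is wide, since $K$ supplies the maximal identities $\max(G_0)$ while the remaining identities of $G_0$, being non-maximal, lie in $G \setminus \max G$; it is closed under inverses because both pieces are; and for a composable pair in $\Phi(K)$ the structural fact forces both factors into the same piece, so the product stays in $K$ or in $G \setminus \max G$ accordingly. The restriction axiom $(x\mid e) \in \Phi(K)$ follows similarly, splitting on whether $x$ is maximal and using that $G \setminus \max G$ is an order ideal. For $\Psi(H) = H \cap \max G$: intersecting a wide subgroupoid of $G$ with the subgroupoid $\max G$ yields a subgroupoid whose identities are $G_0 \cap \max(G_0) = \max(G_0) = (\max G)_0$, so it is wide in $\max G$ (and automatically an ordered subgroupoid, since the order induced on $\max G$ is trivial among maximal elements).

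Finally I would compute the compositions. The identity $\Psi(\Phi(K)) = K$ is immediate from $K \subseteq \max G$ and $(G \setminus \max G) \cap \max G = \emptyset$. For $\Phi(\Psi(H)) = H$, the inclusion $\subseteq$ uses that $H$ contains $G \setminus \max G$, while $\supseteq$ is the observation that each $g \in H$ is either maximal, hence in $H \cap \max G$, or non-maximal, hence in $G \setminus \max G$. I expect the only genuinely delicate point to be the product closure of $\Phi(K)$, i.e.\ ruling out composable pairs with one maximal and one non-maximal factor; everything else is bookkeeping once the characterization of $\max G$ from the preceding proposition is available.
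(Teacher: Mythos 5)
Your proof is correct and takes exactly the same route as the paper: the paper exhibits the same pair of maps, $H' \mapsto H' \cup (G \setminus \max G)$ and $H \mapsto H \cap \max G$, and simply asserts they are mutually inverse bijections. Your additional verifications (in particular, that a composable pair cannot mix a maximal with a non-maximal factor, and that restrictions $(x \mid e)$ stay in $\Phi(K)$ because a proper restriction is never maximal) are precisely the details the paper's one-line proof leaves implicit.
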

\begin{proof}
The map \begin{align*}\mathsf{SubGr(Max G)} & \longrightarrow  \mathsf{SubGr(G)} \\ H' & \longmapsto  H = H' \cup (G\setminus \max G) \end{align*}is clearly bijective, with inverse given by \begin{align*} \mathsf{SubGr(G)} & \longrightarrow  \mathsf{SubGr(Max G)}  \\ H & \longmapsto H \cap \max G.\end{align*} 
\end{proof}

If $G$ is an inductive groupoid and $\beta$ is an orthogonal inductive action of $G$ on $A$ such that $E_g \neq 0$ for all $g \in \max G = G'$, we have that $\beta' = \beta|_{G'} = \{ \{ E_g \}_{g \in G'}, \{ \beta_g \}_{g \in G'} \}$ is an orthogonal action of $G'$ on $A$. Also, $A^\beta = A^{\beta'}$ and a $A^\beta$-subalgebra $B$ of $A$ is $\beta$-strong in the sense of \cite{paques2018galois} if, and only if, it is $\beta'$-strong.

Thus, given a separable, $\beta'$-strong $A^{\beta'}$-subalgebra $B$ of $A$ we can use the Galois correspondence for the case of groupoids \cite[Theorem 4.8(i)]{paques2018galois} to obtain a wide subgroupoid $H'_B$ of $G'$. This is a subgroupoid of $G$, but not wide nor ordered in general. However, we can take the image of $H'_B$ by the map in Lemma \ref{lema511}, adding all the non maximal elements of $G$ on $H'_B$, forming precisely $H_B$ in the sense of (\ref{hb}). Since $\mathbb{S}(H_B) = T_B$ by Lemma \ref{lemacompatibilidade}, it follows that $T_B$ is well-defined. Also, notice that $(S \setminus \max S) \subseteq T_B$, for all $B$.

From now on, suppose that the ideals associated with maximal elements of $E(S)$ are nonzero. The next two results follow from Lemma \ref{lemacompatibilidade} and \cite{paques2018galois}. 

\begin{theorem} \label{teocorrespondencia1}
Let $S$ be an inverse semigroup and $\beta$ an orthogonal action of $S$ on $A$. Assume that $A$ is $\beta$-Galois over $A^{\beta}$. Let $T$ be an inverse subsemigroup of $S$ such that $(S \setminus \max S) \subseteq T$ . Then: 

\emph{(i)} $\beta_T$ is an action of $T$ on $A_T$ and $A_T$ is $\beta_T$-Galois over $B = (A_T)^{\beta_T}$.

\emph{(ii)} $B$ is $A^\beta$-separable.

Besides that, if $T$ is full, then:

\emph{(iii)} $A_T = A$ and $B$ is $\beta$-strong.

\emph{(iv)} $T = T_B$.

\emph{(v)} If $T$ is Clifford and normal, then the quotient inverse semigroup $S/T$ acts on $B$ via an action $\overline{\beta}$ and $B$ is an $\overline{\beta}$-Galois extension of $A^{\beta}$.
\end{theorem}

\begin{proof}
Let $H = \mathbb{G}(T)$ and $H' = H \cap \max \mathbb{G}(S)$. By \cite[Theorem 4.1]{paques2018galois}, $\beta_{H'}$ is an action of $H'$ on $A_{H'} = A_H$ and $A_H$ is $\beta_{H'}$-Galois over $(A_H)^{\beta_{H'}}$. Then $\beta_H$ is an orthogonal inductive action of $H$ on $A_H$ and $A_H$ is $\beta_H$-Galois over $(A_H)^{\beta_{H}}$. We also have that $(A_H)^{\beta_H}$ is $A^{\beta}$-separable. Thus, by Lemma \ref{lemacompatibilidade}, $A_T = A_H$, $\beta_T = \mathbb{S}(\beta_H)$ and the items (i) and (ii) are proven.

Assume that $T$ is full. That means that $H$ is wide, hence $H'$ is wide in $\max \mathbb{G}(S)$ from where it follows item (iii). Since $T_B = \mathbb{S}(H_B)$, the item (iv) follows from Lemma \ref{lema511}.

To show (v), observe that if $T$ is normal in $S$, then $H$ is normal in $\mathbb{G}(S)$, in the same sense of Definition \ref{grpnormal}. Moreover, notice that this definition implies that $H$ is normal in the sense of \cite{paques2018galois}. Therefore the quotient groupoid $\mathbb{G}(S)/\equiv_H$ is well-defined.

Since $T$ is Clifford, $\equiv_H$ and $\sim_H$ are equal, from where it follows that $\mathbb{G}(S)/\equiv_H \mbox{ and } \mathbb{G}(S)/H$ are isomorphic ordered groupoids \cite[Proposition 4.6]{alyamani2018ordered}, hence isomorphic inductive groupoids. However, by \cite[Theorem 4.1(v)]{paques2018galois}, $\max (\mathbb{G}(S))/\equiv_{H'} \ \simeq \mathbb{G}(S)/\equiv_H$ acts on $B$ via an action $\overline{\beta}$. This isomorphism holds since $H$ is wide and $(\mathbb{G}(S) \setminus \max \mathbb{G}(S)) \subseteq H$. Hence $\mathbb{G}(S)/H$ also acts on $B$ via $\overline{\beta}$, where $B$ is $\overline{\beta}$-Galois over $A^{\beta}$. Since, $\mathbb{G}(S)/H \simeq \mathbb{G}(S/T)$, $\mathbb{S}(\overline{\beta})$ is an action of $S/T$ on $B$, provided that $\overline{\beta}$ is an inductive groupoid action. Let us check that $\overline{\beta}$ is, in fact, an ordered groupoid action, since it already is an orthogonal action.

Before proving that, we remind the reader about the definition of $\overline{\beta}$. Given a representative system $\{ g_i \}_{1 \leq i \leq n}$ of $G/\equiv_H$, we will denote by $\overline{g}$ the coset of the element $g$ with relation to $\equiv_H$. Define $e_g = e_{g_i} = tr_{\beta_H}(1_{g_i})$, for all $g \in \overline{g_i}$. Then $\overline{\beta} = (\{ E_{\overline{g}} \}_{\overline{g} \in G/\equiv_H}, \{ \overline{\beta}_{\overline{g}} \}_{\overline{g} \in G/\equiv_H})$ is given by $E_{\overline{g}} = Be_g$ and $\overline{\beta}_{\overline{g}} (be_{g^{-1}}) = \beta_{g_i}(be_{g_i^{-1}})$.

Now we can prove that $\overline{\beta}$ is an ordered groupoid action. Assume that $[g_i]_H \leq_H [g_j]_H$. Thus there are $x, y \in H$ such that $\exists x \cdot g_i \cdot y$ and $x \cdot g_i \cdot y \leq g_j$. Since $\beta$ is an orthogonal inverse semigroup action, $\mathbb{G}(\beta) := \beta$ is as an orthogonal inductive groupoid action of $\mathbb{G}(S)$ on $A$. Therefore, $E_{y^{-1}} = E_{(x \cdot g_i \cdot y)^{-1}} = \delta_{x \cdot g_i \cdot y,g_j}E_{g_j^{-1}}$ and $E_x = E_{x \cdot g_i \cdot y} = \delta_{x \cdot g_i \cdot y, g_j}E_{g_j}$. Besides that, $\beta_{x \cdot g_i \cdot y} = \beta_{g_j}|_{E_{y^{-1}}}$. Without loss of generality we can assume that $x, y \in \{ g_k \}_{1 \leq k \leq n}$. We will denote $\delta = \delta_{x \cdot g_i \cdot y, g_j}$. On the one hand,
\begin{align*}
    \beta_{x \cdot g_i \cdot y}(be_{y^{-1}}) & = \beta_{x \cdot g_i}(\beta_y (be_{y^{-1}})e_{g_i^{-1}}) \\ & = \beta_{x \cdot g_i}(be_{g_i^{-1}}) \\ & = \beta_{x}(\beta_{g_i}(be_{g_i^{-1}})e_{x^{-1}}) \\ & = \beta_{g_i}(be_{g_i^{-1}})e_x \\ & = \beta_{g_i}(be_{g_i^{-1}})e_{x \cdot g_i \cdot y},
\end{align*}
since $x, y \in H = H_B$. On the other hand,
\begin{align*}
    \beta_{x \cdot g_i \cdot y}(be_{y^{-1}}) & = \beta_{g_j}(be_{y^{-1}}) \\ & = \beta_{g_j}(be_{y^{-1}})e_{g_j}.
\end{align*}

Observe that
\begin{align*}
    e_{y^{-1}} & = \sum\limits^{h \in H}_{r(h) = d(y)} 1_h  = \sum\limits^{h \in H}_{r(h) = d(y)} 1_{y^{-1}} \\ & = \sum\limits^{h \in H}_{r(h) = d(y)} \delta 1_{g_j^{-1}}  = \delta \sum\limits^{h \in H}_{r(h) = d(y)} 1_{g_j^{-1}} \\ & = \delta \sum\limits^{h \in H}_{r(h) = d(g_j)} 1_{g_j^{-1}} = \delta e_{g_j^{-1}}.
\end{align*}

And we can see that $e_x = \delta e_{g_j}$ following the same steps. Since
\begin{align*}
    \beta_{g_i}(be_{g_i^{-1}})e_x = \beta_{g_j}(be_{y^{-1}})e_{g_j},
\end{align*}
we have that
\begin{align} \label{ig2}
    \delta \beta_{g_i}(be_{g_i^{-1}})e_{g_j} = \delta \beta_{g_j}(be_{g_j^{-1}})e_{g_j}.
\end{align}

Since $T$ is Clifford, $r(x) = d(x) = r(g_i)$, from where it follows that $e_x = e_{x^{-1}} = e_{g_i}$. In this case,
\begin{align} \label{ig3}
    E_{\overline{g_i}} = Be_{g_i} = Be_x = \delta Be_{g_j} = \delta E_{\overline{g_j}}.
\end{align}

Hence (\ref{ig2}) and (\ref{ig3}) tell us that $[g_i]_H \leq_H [g_j]_H$ implies $E_{\overline{g_i}} \subseteq E_{\overline{g_j}}$ and $\overline{\beta}_{\overline{g_i}} = \overline{\beta}_{\overline{g_j}} | _{E_{\overline{g_i}^{-1}}}$. That is, $\overline{\beta}$ is an ordered groupoid action as we desired.

\end{proof}

\begin{theorem} \label{teocorrespondencia2}
Assume that $A$ is $\beta$-Galois over $A^{\beta}$ and let $B$ be a separable, $\beta$-strong, $A^{\beta}$-subalgebra of $A$. Then denoting $T := T_B$ we have that $A^{\beta_T} = B$.
\end{theorem}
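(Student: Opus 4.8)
The plan is to prove the two inclusions $B \subseteq A^{\beta_T}$ and $A^{\beta_T} \subseteq B$ separately, the first being immediate from the definition of $T_B$ and the second being reduced to the groupoid Galois correspondence of \cite{paques2018galois} via the functors $\mathbb{S}$ and $\mathbb{G}$.

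First I would dispose of the easy inclusion. Since $T = T_B$ is a full inverse subsemigroup of $S$ (by the proposition asserting that $T_B$ is full for every subalgebra $B$), we have $E(T) = E(S)$ and hence $A_T = \bigoplus_{e \in E(T)} E_e = A$. Thus $A^{\beta_T} = \{\, a \in A : \beta_t(a1_{t^{-1}}) = a1_t \text{ for all } t \in T \,\}$. If $b \in B$, then by the very definition of $T_B$ we have $\beta_t(b1_{t^{-1}}) = b1_t$ for every $t \in T_B = T$, so $b \in A^{\beta_T}$. This gives $B \subseteq A^{\beta_T}$ with no further work.

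For the reverse inclusion I would translate everything into the inductive groupoid $G = \mathbb{G}(S)$. Set $H = \mathbb{G}(T_B)$; by Lemma \ref{lemacompatibilidade}(iii) this is exactly $H_B$ in the sense of (\ref{hb}), and it is a wide ordered subgroupoid of $G$ containing $G \setminus \max G$. Writing $G' = \max G$ and $H' = H \cap G' = H'_B$, I would first observe that the inverse-semigroup fixed algebra and the groupoid fixed algebra coincide: since $\beta_T = \mathbb{S}(\beta_H)$ by Lemma \ref{lemacompatibilidade}(ii), and $S$ and $\mathbb{G}(S)$ share the same underlying set with $\beta_t = \beta_h$ whenever $t = h$, the defining conditions are literally identical, so $A^{\beta_T} = A^{\beta_H}$. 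Next, orthogonality forces $1_g = 0$ for every non-maximal $g$, so the fixing conditions attached to non-maximal elements hold automatically; hence $A^{\beta_H} = A^{\beta_{H'}}$, the fixed algebra of the restricted action $\beta' = \beta|_{G'}$ under the wide subgroupoid $H'_B$ of $G'$.

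It then remains to identify $A^{\beta_{H'_B}}$ with $B$, for which I would invoke the direction of the groupoid Galois correspondence \cite[Theorem 4.8]{paques2018galois} that recovers the subalgebra from its subgroupoid. Since $B$ is separable and $\beta$-strong, and a subalgebra is $\beta$-strong exactly when it is $\beta'$-strong (as noted in the discussion preceding the theorem), that result yields $A^{\beta_{H'_B}} = B$. Chaining the identifications gives $A^{\beta_T} = A^{\beta_H} = A^{\beta_{H'}} = B$, which together with $B \subseteq A^{\beta_T}$ closes the argument. The main obstacle I anticipate is bookkeeping rather than depth: one must verify carefully that the $\beta$-strong hypothesis for $S$ matches the $\beta'$-strong hypothesis used in \cite{paques2018galois} for $G'$, and that the $H'_B$ produced by the groupoid correspondence is genuinely $H_B \cap \max G$, so that the two fixed-algebra reductions compose with the cited theorem without a gap.
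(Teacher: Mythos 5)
Your proposal is correct and follows essentially the same route as the paper: translate $T_B$ into the wide ordered subgroupoid $H_B = \mathbb{G}(T_B)$ via Lemma \ref{lemacompatibilidade} and then invoke the groupoid Galois correspondence of \cite{paques2018galois} (the paper cites its Theorem 4.5) to conclude $A^{\beta_T} = A^{\beta_{H_B}} = B$. The extra bookkeeping you flag at the end --- reducing from $\mathbb{G}(S)$ to $\max\mathbb{G}(S)$, matching $\beta$-strong with $\beta'$-strong, and checking $H'_B = H_B \cap \max\mathbb{G}(S)$ --- is precisely what the paper disposes of in the discussion surrounding Lemma \ref{lema511} before Theorem \ref{teocorrespondencia1}, so the two arguments coincide up to where that verification is placed.
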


\begin{proof}

By \cite[Theorem 4.5]{paques2018galois}, the result holds for $H = \mathbb{G}(T)$. By Lemma \ref{lemacompatibilidade}, $\beta_T = \mathbb{S}(\beta_H)$. Thus $A^{\beta_T} = A^{\beta_H} = B$.

\end{proof}

Now we can state the main result of this work, the Fundamental Theorem of the Galois theory, which extends \cite[Theorem 2.3]{chase1969galois}.

\begin{theorem} \label{teocorr} Let $S$ be an inverse semigroup and $\beta$ an orthogonal action of $S$ on $A$. Assume that $A$ is $\beta$-Galois over $A^{\beta}$.\begin{itemize}

 \item[(a)] \textbf{The Galois Correspondence:} there is a one-to-one correspondence between the full inverse subsemigroups $T$ of $S$ such that $(S \setminus \max S) \subseteq T$ and the separable, $\beta$-strong, $A^{\beta}$-subalgebras $B$ of $A$. More precisely, under these conditions, the maps $T \mapsto A^{\beta_T}$ and $B \mapsto T_B$ are inverses, in the sense that
\begin{align*}
    A^{\beta_{T_B}} = B \text{ and } T_{A^{\beta_T}} = T.
\end{align*}

\item[(b)] Furthermore, if $T$ is a normal Clifford inverse subsemigroup of $S$ such that $(S \setminus \max S) \subseteq T$, then $S/T$ acts on $A^{\beta_T}$ via an orthogonal action $\overline{\beta}$ and $A^{\beta_T}$ is a $\overline{\beta}$-Galois extension of $A^{\beta}$.
\end{itemize}
\end{theorem}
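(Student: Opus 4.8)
The plan is to assemble the theorem from the two correspondence results already established, Theorems \ref{teocorrespondencia1} and \ref{teocorrespondencia2}, rather than to re-prove anything from scratch; the whole content of part (a) is a bookkeeping verification that the two maps $T \mapsto A^{\beta_T}$ and $B \mapsto T_B$ are mutually inverse bijections between the two indicated sets. First I would check that both maps are well-defined. For $B \mapsto T_B$, the earlier proposition guarantees that $T_B$ is a full inverse subsemigroup of $S$ with $(S \setminus \max S) \subseteq T_B$, so it lands in the correct set. For $T \mapsto A^{\beta_T}$, I would invoke Theorem \ref{teocorrespondencia1}: since $T$ is full and contains $S \setminus \max S$, item (iii) gives $A_T = A$, so that $A^{\beta_T}$ is genuinely an $A^\beta$-subalgebra of $A$; item (ii) gives that it is $A^\beta$-separable, and item (iii) again gives that it is $\beta$-strong. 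Hence this map lands in the set of separable, $\beta$-strong subalgebras.

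Next I would verify that the two composites are the identity. Starting from a full $T$ with $(S\setminus \max S)\subseteq T$ and setting $B = A^{\beta_T}$, the equality $T_{A^{\beta_T}} = T$ is exactly Theorem \ref{teocorrespondencia1}(iv). Starting from a separable, $\beta$-strong subalgebra $B$ and setting $T = T_B$, the equality $A^{\beta_{T_B}} = B$ is exactly Theorem \ref{teocorrespondencia2}. Since both composites are identities, the two maps are inverse bijections, which proves (a). For part (b) I would simply specialize: if $T$ is moreover normal and Clifford (still full and containing $S \setminus \max S$), then Theorem \ref{teocorrespondencia1}(v) furnishes an action $\overline{\beta}$ of the quotient inverse semigroup $S/T$ on $B = A^{\beta_T}$ for which $B$ is a $\overline{\beta}$-Galois extension of $A^\beta$; the orthogonality of $\overline{\beta}$ is inherited from the orthogonality established in the course of that proof, where $\overline{\beta}$ was realized as $\mathbb{S}$ applied to an orthogonal inductive action of $\mathbb{G}(S)/\mathbb{G}(T)$. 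This yields (b) directly.

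I do not expect any genuinely new obstacle here, since the analytic content—separability, $\beta$-strongness, and the Galois property under restriction and quotient—has already been discharged in the prior theorems through the ESN functors $\mathbb{S}$ and $\mathbb{G}$. The one point demanding care, and the closest thing to a subtlety, is the translation between the inverse-semigroup side and the groupoid side: one must keep track that the groupoid correspondence of \cite{paques2018galois} is stated for $\max \mathbb{G}(S)$, that Lemma \ref{lema511} is what lets us add back the non-maximal elements to recover a wide ordered subgroupoid of $\mathbb{G}(S)$, and that Lemma \ref{lemacompatibilidade} identifies $T_B = \mathbb{S}(H_B)$, $\beta_T = \mathbb{S}(\beta_H)$ and $A_T = A_H$, so that the inverse-semigroup maps $T \mapsto A^{\beta_T}$ and $B \mapsto T_B$ are exactly the images under $\mathbb{S}$ of the corresponding groupoid maps. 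Confirming this matching is the main—purely organizational—step, and once it is in place the bijectivity and the quotient statement follow immediately from the cited groupoid results.
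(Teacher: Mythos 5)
Your proposal is correct and follows essentially the same route as the paper, whose proof of this theorem is precisely the one-line assembly of Theorems \ref{teocorrespondencia1} and \ref{teocorrespondencia2} (with well-definedness of $B \mapsto T_B$ coming from the earlier proposition on $T_B$ being full, and part (b) from Theorem \ref{teocorrespondencia1}(v)). Your more explicit bookkeeping of the two composites and of the translation through Lemmas \ref{lemacompatibilidade} and \ref{lema511} simply makes visible what the paper leaves implicit.
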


\begin{proof}
The proof follows from Theorems \ref{teocorrespondencia1} and \ref{teocorrespondencia2}.
\end{proof}

We give a graphic illustration of Theorem \ref{teocorr}:

$$ \xymatrix { &  & A \ar@{-}[dll] \ar@{-}[d]\ar@{-}[dr] & \\
E(S) \ar@{-}[d] \ar@{-}[dr] & & A^{\beta_T} \ar@{<--}[dll]\ar@{-}[dr] & B \ar@{-}[d]\\
T\ar@{-}[dr] & T_B \ar@{-}[d]\ar@{<--}[rru] & & A^{\beta} \ar@{-}[dll] \\
 & S &  & }  $$

We close this paper with an example of the Galois correspondence. Before that, we present the notion of connected groupoid. We say that a groupoid $G$ is \emph{connected} when for all $e,f \in G_0$ there is $g \in G$ such that $d(g) = e$ and $r(g) = f$. It is well known that every connected groupoid $G$ can be written as $G \simeq G_0^2 \times H$, where $H$ is a group - called the isotropy group of $G$ - and $G_0^2$ is the coarse groupoid associated with $G_0$, that is, the groupoid with elements $(e,f) \in G_0 \times G_0$ where $d(e,f) = e$, $r(e,f) = f$, $\exists (e,f)(i,j)$ if, and only if, $ j = e$ and in this case $(e,f)(i,j) = (i,f)$. It is clear that every groupoid is a disjoint union of connected groupoids.

 \begin{exe} \label{exfinal}
  
  Let $X = \{ 1, 2, 3 \}$. We first construct $I(X)$, the inverse semigroup of partial bijections of $X$. This set has 34 elements, namely $I_{\emptyset}, I_1, I_2, I_3, I_{12}, $ $ I_{13},$ $ I_{23}, $ $ I_{123}, T_{12},$ $ T_{21}, $ $ T_{13},$ $ T_{31}, T_{23}, T_{32}, S_{12}, S_{13}, S_{23}, D_{12}^{13},$ $ D_{13}^{12}, $ $ D_{12}^{23},$ $ D_{23}^{12}, $ $ D_{13}^{23}, $ $ D_{23}^{13}, $ $ P_{12}^{13}, $ $ P_{13}^{12}, P_{12}^{23}, P_{23}^{12}, P_{13}^{23}, P_{23}^{13}, T_{12}^3, T_{13}^2, T_{23}^1,$ $ S_{123},$ $ S_{132}$, defined by:
  
\begin{minipage}{7cm}
\begin{align*}
    T_{ij} : \{ i \} & \rightarrow \{ j \} \\
    i & \mapsto j
\end{align*}
\end{minipage}
\begin{minipage}{3cm}
\begin{align*}
    S_{ij} : \{i, j\} & \rightarrow \{ i, j\} \\
    i & \mapsto j \\
    j & \mapsto i
\end{align*}
\end{minipage}

\vspace{0,4cm}

\begin{minipage}{7cm}
\begin{align*}
    D_{ij}^{jk} : \{i, j\} & \rightarrow \{ j, k\} \\
    i & \mapsto k \\
    j & \mapsto j
\end{align*}
\end{minipage}
\begin{minipage}{3cm}
\begin{align*}
    P_{ij}^{jk} : \{i, j\} & \rightarrow \{ j, k\} \\
    i & \mapsto j \\
    j & \mapsto k
\end{align*}
\end{minipage}

\vspace{0,3cm}

A good way to distinguish between the $D$'s and the $P$'s is to remember that there is always a unique element in the intersection of the domain and the range of these bijections. The $D$-type bijections fix this element, while the $P$-type bijections do not.

\begin{minipage}{6.5cm}
\begin{align*}
    S_{ijk} : \{i, j, k\} & \rightarrow \{ i, j, k\} \\
    i & \mapsto j \\
    j & \mapsto k \\
    k & \mapsto i
\end{align*}
\end{minipage}
\begin{minipage}{4cm}
\begin{align*}
    T_{ij}^k : \{i, j, k \} & \rightarrow \{ i, j, k \} \\
    i & \mapsto j \\
    j & \mapsto i \\
    k & \mapsto k
\end{align*}
\end{minipage}

\vspace{0,3cm}

The $I$-type maps are the identities and then compose $E(I(X))$. It is easy to see that $\{ I_{123}, T_{12}^3, T_{13}^2, T_{23}^1, S_{123}, S_{132} \} \simeq D_3$, the dihedral group of degree 3. We will identify these two sets. The meet semilattice $E(I(X))$ has the following order diagram:

\begin{center}
\begin{tikzcd}
               & I_{123} \arrow[ld, dash] \arrow[d, dash] \arrow[rd, dash] & \\
I_{12} \arrow[d, dash] \arrow[rd, dash] & I_{13} \arrow[ld, dash] \arrow[rd, dash] & I_{23} \arrow[ld, dash] \arrow[d, dash] \\
               I_{1} \arrow[rd, dash] & I_{2} \arrow[d, dash] & I_{3} \arrow[ld, dash] \\
               & I_{\emptyset} &
\end{tikzcd}
\end{center}

The order diagram of $I(X)$ can be split into two parts:

\begin{center}

\tikzset{every picture/.style={line width=0.75pt}} 

\begin{tikzpicture}[x=0.75pt,y=0.75pt,yscale=-1,xscale=1]

\draw    (117.5,226) -- (119.5,317) ;
\draw    (117.5,226) -- (280.5,317) ;
\draw    (236.5,228) -- (177.5,316) ;
\draw    (236.5,228) -- (280.5,317) ;
\draw    (177.5,222) -- (119.5,317) ;
\draw    (177.5,222) -- (177.5,316) ;
\draw    (283.5,230) -- (329.5,318) ;
\draw    (283.5,230) -- (235.5,319) ;
\draw    (325.5,224) -- (235.5,319) ;
\draw    (325.5,224) -- (428.5,319) ;
\draw    (376.5,229) -- (329.5,318) ;
\draw    (376.5,229) -- (428.5,319) ;
\draw    (427.5,231) -- (382.5,320) ;
\draw    (427.5,231) -- (485.5,320) ;
\draw    (542.5,236) -- (382.5,320) ;
\draw    (485.5,234) -- (540.5,319) ;
\draw    (485.5,234) -- (485.5,320) ;
\draw    (542.5,236) -- (540.5,319) ;
\draw    (175.5,124) -- (175.5,198) ;
\draw    (175.5,124) -- (119.5,197) ;
\draw    (175.5,124) -- (232.5,197) ;
\draw    (324.5,126) -- (324.5,200) ;
\draw    (324.5,126) -- (287.5,196) ;
\draw    (324.5,126) -- (363.5,198) ;
\draw    (482.5,132) -- (482.5,206) ;
\draw    (482.5,132) -- (429.5,199) ;
\draw    (482.5,132) -- (535.5,206) ;

\draw (279,323.4) node [anchor=north west][inner sep=0.75pt]    {$I_{1}$};
\draw (324,324.4) node [anchor=north west][inner sep=0.75pt]    {$I_{2}$};
\draw (375,323.4) node [anchor=north west][inner sep=0.75pt]    {$I_{3}$};
\draw (105,319.4) node [anchor=north west][inner sep=0.75pt]    {$T_{23}$};
\draw (165,320.4) node [anchor=north west][inner sep=0.75pt]    {$T_{32}$};
\draw (223,322.4) node [anchor=north west][inner sep=0.75pt]    {$T_{13}$};
\draw (414,323.4) node [anchor=north west][inner sep=0.75pt]    {$T_{31}$};
\draw (471,323.4) node [anchor=north west][inner sep=0.75pt]    {$T_{12}$};
\draw (533,323.4) node [anchor=north west][inner sep=0.75pt]    {$T_{21}$};
\draw (106,202.4) node [anchor=north west][inner sep=0.75pt]    {$D^{13}_{12}$};
\draw (221,201.4) node [anchor=north west][inner sep=0.75pt]    {$D^{12}_{13}$};
\draw (268,202.4) node [anchor=north west][inner sep=0.75pt]    {$D^{23}_{12}$};
\draw (168,201.4) node [anchor=north west][inner sep=0.75pt]    {$S_{23}$};
\draw (361,203.4) node [anchor=north west][inner sep=0.75pt]    {$D^{12}_{23}$};
\draw (410,206.4) node [anchor=north west][inner sep=0.75pt]    {$D^{23}_{13}$};
\draw (527,212.4) node [anchor=north west][inner sep=0.75pt]    {$D^{13}_{23}$};
\draw (470,210.4) node [anchor=north west][inner sep=0.75pt]    {$S_{12}$};
\draw (314,202.4) node [anchor=north west][inner sep=0.75pt]    {$S_{13}$};
\draw (468,105.4) node [anchor=north west][inner sep=0.75pt]    {$T^{3}_{12}$};
\draw (314,98.4) node [anchor=north west][inner sep=0.75pt]    {$T^{2}_{13}$};
\draw (162,98.4) node [anchor=north west][inner sep=0.75pt]    {$T^{1}_{23}$};

\end{tikzpicture}
\end{center}

\hfill

\noindent and

\hfill

\begin{center}

\tikzset{every picture/.style={line width=0.75pt}} 

\begin{tikzpicture}[x=0.75pt,y=0.75pt,yscale=-1,xscale=1]

\draw    (239.5,228) -- (182.5,318) ;
\draw    (181.5,229) -- (237.5,319) ;
\draw    (181.5,229) -- (301.5,323) ;
\draw    (239.5,228) -- (344.5,322) ;
\draw    (302.5,230) -- (402.5,322) ;
\draw    (302.5,230) -- (182.5,318) ;
\draw    (351.5,229) -- (468.5,323) ;
\draw    (351.5,229) -- (237.5,319) ;
\draw    (408.5,233) -- (468.5,323) ;
\draw    (408.5,233) -- (301.5,323) ;
\draw    (459.5,233) -- (402.5,322) ;
\draw    (459.5,233) -- (344.5,322) ;
\draw    (241,135) -- (240.5,194) ;
\draw    (241,135) -- (297.5,196) ;
\draw    (241,135) -- (458.5,202) ;
\draw    (406,141) -- (405.5,200) ;
\draw    (406,141) -- (350.5,196) ;
\draw    (406,141) -- (187.5,198) ;

\draw (171,321.4) node [anchor=north west][inner sep=0.75pt]    {$T_{23}$};
\draw (231,322.4) node [anchor=north west][inner sep=0.75pt]    {$T_{32}$};
\draw (289,324.4) node [anchor=north west][inner sep=0.75pt]    {$T_{13}$};
\draw (339,325.4) node [anchor=north west][inner sep=0.75pt]    {$T_{31}$};
\draw (396,325.4) node [anchor=north west][inner sep=0.75pt]    {$T_{12}$};
\draw (458,325.4) node [anchor=north west][inner sep=0.75pt]    {$T_{21}$};
\draw (169,202.4) node [anchor=north west][inner sep=0.75pt]    {$P^{23}_{13}$};
\draw (229,201.4) node [anchor=north west][inner sep=0.75pt]    {$P^{13}_{23}$};
\draw (286,202.4) node [anchor=north west][inner sep=0.75pt]    {$P^{23}_{12}$};
\draw (227,110.4) node [anchor=north west][inner sep=0.75pt]    {$S_{123}$};
\draw (338,201.4) node [anchor=north west][inner sep=0.75pt]    {$P^{12}_{23}$};
\draw (392,205.4) node [anchor=north west][inner sep=0.75pt]    {$P^{13}_{12}$};
\draw (449,206.4) node [anchor=north west][inner sep=0.75pt]    {$P^{12}_{13}$};
\draw (390,118.4) node [anchor=north west][inner sep=0.75pt]    {$S_{132}$};

\end{tikzpicture}

\end{center}

Since $I_{123} = \max \{ E(I(X)) \}$, we have that every action of $I(X)$ on an algebra $A$ where (1) holds is, in fact, an action of $D_3$ in $A$. 

Consider the inverse subsemigroup of $I(X)$ given by $S = I(X) - D_3$. We will construct an action of $S$ on an $R$-algebra $A = \bigoplus_{i = 1}^6 Re_i$, where $R$ is a commutative ring, $e_ie_j = \delta_{i,j}e_i$ and $\sum_{i = 1}^6 e_i = 1_A$.

Define $E_{I_{12}} = Re_1 \oplus Re_2 $, $E_{I_{13}} = Re_3 \oplus Re_4$ and $E_{I_{23}} = Re_5 \oplus Re_6$. Hence $A = E_{I_{12}} \oplus E_{I_{13}} \oplus E_{I_{23}} = \bigoplus\limits_{e \in E(S)} E_e$.

The isomorphisms are given by

\noindent\begin{minipage}{4,2cm}
\begin{align*}
    \beta_{D_{12}^{13}} : E_{I_{12}} & \rightarrow E_{I_{13}} \\
    ae_1 + be_2 & \mapsto ae_3 + be_4
\end{align*}
\end{minipage}
\begin{minipage}{4cm}
\begin{align*}
    \beta_{D_{12}^{23}} : E_{I_{12}} & \rightarrow E_{I_{23}} \\
    ae_1 + be_2 & \mapsto be_5 + ae_6
\end{align*}
\end{minipage}
\begin{minipage}{4,2cm}
\begin{align*}
    \beta_{D_{13}^{23}} : E_{I_{13}} & \rightarrow E_{I_{23}} \\
    ae_3 + be_4 & \mapsto ae_5 + be_6
\end{align*}
\end{minipage}

\noindent\begin{minipage}{4,2cm}
\begin{align*}
    \beta_{P_{12}^{13}} : E_{I_{12}} & \rightarrow E_{I_{13}} \\
    ae_1 + be_2 & \mapsto be_3 + ae_4
\end{align*}
\end{minipage}
\begin{minipage}{4,2cm}
\begin{align*}
    \beta_{P_{12}^{23}} : E_{I_{12}} & \rightarrow E_{I_{23}} \\
    ae_1 + be_2 & \mapsto ae_5 + be_6
\end{align*}
\end{minipage}
\begin{minipage}{4cm}
\begin{align*}
    \beta_{P_{13}^{23}} : E_{I_{13}} & \rightarrow E_{I_{23}} \\
    ae_3 + be_4 & \mapsto be_5 + ae_6
\end{align*}
\end{minipage}

\noindent\begin{minipage}{4,2cm}
\begin{align*}
    \beta_{S_{12}} : E_{I_{12}} & \rightarrow E_{I_{12}} \\
    ae_1 + be_2 & \mapsto be_1 + ae_2
\end{align*}
\end{minipage}
\begin{minipage}{4,2cm}
\begin{align*}
    \beta_{S_{13}} : E_{I_{13}} & \rightarrow E_{I_{13}} \\
    ae_3 + be_4 & \mapsto be_3 + ae_4
\end{align*}
\end{minipage}
\begin{minipage}{4cm}
\begin{align*}
\beta_{S_{23}} : E_{I_{23}} & \rightarrow E_{I_{23}} \\
    ae_5 + be_6 & \mapsto be_5 + ae_6    
\end{align*}
\end{minipage}

\vspace{0,3cm}
\noindent and the inverses are obvious. Notice that in our notation $D$ means fix and $P$ means change, but in the action's isomorphisms between $E_{I_{13}}$ and $E_{I_{23}}$ this definition is switched. Thus $\beta = ( \{ E_s \}_{s \in S}, \{ \beta_s \}_{s \in S})$ is an action of $S$ on $A$ and $A^{\beta} = R1_A \simeq R$. Moreover, $A$ is $\beta$-Galois over $R$ with system of $\beta$-Galois coordinates equals to $\{ x_i = y_i = e_i\}_{i = 1, \ldots , 6}$. These are the only ideals that matter to us, since $\max S = \{ I_{12}, I_{13}, I_{23}, S_{12}, S_{13}, S_{23}, D_{12}^{13}, D_{13}^{12}, D_{12}^{23}, $ $ D_{23}^{12}, $ $ D_{13}^{23}, $ $ D_{23}^{13}, P_{12}^{13}, $ $ P_{13}^{12}, P_{12}^{23}, P_{23}^{12}, P_{13}^{23}, P_{23}^{13} \}$.

We will compute the $A^{\beta}$-subalgebras between $R$ and $A$ that are separable and $\beta$-strong, as well as the associated inverse subsemigroups. Consider $T = (S \setminus \max S) \cup E(S) =  \{ I_{\emptyset}, I_1, I_2, I_3, I_{12}, I_{13},$  $ I_{23}, T_{12}, T_{21}, T_{13}, T_{31}, T_{23}, T_{32} \}$. We list the 31 $\beta$-strong algebras and their associated inverse subsemigroups:

\hfill

\noindent $A \leftrightarrow T_A = T$

\noindent $B_1 = R(e_1 + e_2) \oplus Re_3 \oplus Re_4 \oplus Re_5 \oplus Re_6 \leftrightarrow T_{B_1} = T \cup \{ S_{12} \}$

\noindent $B_{2} = Re_1 \oplus Re_2 \oplus R(e_3 + e_4) \oplus  Re_5 \oplus Re_6 \leftrightarrow T_{B_2} = T \cup \{ S_{13} \}$

\noindent $B_{3} = Re_1 \oplus Re_2 \oplus  Re_3 \oplus Re_4 \oplus R(e_5 + e_6) \leftrightarrow T_{B_3} = T \cup \{ S_{23} \}$

\noindent $B_{4} = R(e_1 + e_3) \oplus R(e_2 + e_4) \oplus Re_5 \oplus Re_6 \leftrightarrow T_{B_4} = T \cup \{ D_{12}^{13}, D_{13}^{12} \}$

\noindent $B_{5} = R(e_1 + e_6) \oplus R(e_2 + e_5) \oplus Re_3 \oplus Re_4 \leftrightarrow T_{B_5} = T \cup \{ D_{12}^{23}, D_{23}^{12} \}$

\noindent $B_{6} = Re_1 \oplus Re_2 \oplus R(e_3 + e_5) \oplus R(e_4 + e_6)  \leftrightarrow T_{B_6} = T \cup \{ D_{13}^{23}, D_{23}^{13} \}$

\noindent $B_{7} = R(e_1 + e_4) \oplus R(e_2 + e_3) \oplus Re_5 \oplus Re_6 \leftrightarrow  T_{B_7} = T \cup \{ P_{12}^{13}, P_{13}^{12} \}$

\noindent $B_{8} = R(e_1 + e_5) \oplus R(e_2 + e_6) \oplus Re_3 \oplus Re_4 \leftrightarrow T_{B_8} = T \cup \{ P_{12}^{23}, P_{23}^{12} \}$

\noindent $B_{9} = Re_1 \oplus Re_2 \oplus R(e_3 + e_6) \oplus R(e_4 + e_5)  \leftrightarrow T_{B_9} = T \cup \{ P_{13}^{23}, P_{23}^{13} \}$

\noindent $C_{1} = R(e_1 + e_2) \oplus R(e_3 + e_4) \oplus Re_5 \oplus Re_6 \leftrightarrow T_{C_1} = T \cup \{ S_{12}, S_{13} \}$

\noindent $C_{2} = R(e_1 + e_2) \oplus Re_3 \oplus Re_4 \oplus R(e_5 + e_6) \leftrightarrow T_{C_2} = T \cup \{ S_{12}, S_{23} \}$

\noindent $C_{3} = Re_1 \oplus Re_2 \oplus R(e_3 + e_4) \oplus R(e_5 + e_6)\leftrightarrow T_{C_3} = T \cup \{ S_{13}, S_{23} \}$

\noindent $C_{4} = R(e_1 + e_3) \oplus R(e_2 + e_4) \oplus R(e_5 + e_6) \leftrightarrow T_{C_4} = T \cup \{ S_{23}, D_{12}^{13}, D_{13}^{12} \}$

\noindent $C_{5} = R(e_1 + e_6) \oplus R(e_2 + e_5) \oplus R(e_3 + e_4) \leftrightarrow T_{C_5} = T \cup \{S_{13}, D_{12}^{23}, D_{23}^{12} \}$

\noindent $C_{6} = R(e_1 + e_2) \oplus R(e_3 + e_5) \oplus R(e_4 + e_6) \leftrightarrow T_{C_6} = T \cup \{ S_{12}, D_{13}^{23}, D_{23}^{13} \}$

\noindent $C_{7} = R(e_1 + e_4) \oplus R(e_2 + e_3) \oplus R(e_5 + e_6) \leftrightarrow T_{C_7} = T \cup \{S_{23}, P_{12}^{13}, P_{13}^{12} \}$

\noindent $C_{8} = R(e_1 + e_5) \oplus R(e_2 + e_6) \oplus R(e_3 + e_4) \leftrightarrow T_{C_8} = T \cup \{ S_{13}, P_{12}^{23}, P_{23}^{12} \}$

\noindent $C_{9} = R(e_1 + e_2) \oplus R(e_3 + e_6) \oplus R(e_4 + e_5) \leftrightarrow T_{C_9} = T \cup \{ S_{12}, P_{13}^{23}, P_{23}^{13} \}$

\noindent $C_{10} = R(e_1 + e_3 + e_5) \oplus R(e_2 + e_4 + e_6) \leftrightarrow T_{C_{10}} = T_{B_4} \cup T_{B_6} \cup T_{B_8}$

\noindent $C_{11} = R(e_1 + e_3 + e_6) \oplus R(e_2 + e_4 + e_5) \leftrightarrow T_{C_{11}} = T_{B_4} \cup T_{B_5} \cup T_{B_9}$

\noindent $C_{12} = R(e_1 + e_4 + e_5) \oplus R(e_2 + e_3 + e_6) \leftrightarrow T_{C_{12}} =  T_{B_7} \cup T_{B_8} \cup T_{B_9}$

\noindent $C_{13} = R(e_1 + e_4 + e_5) \oplus R(e_2 + e_3 + e_5) \leftrightarrow T_{C_{13}} = T_{B_5} \cup T_{B_6} \cup T_{B_7}$

\noindent $F_{1} = R(e_1 + e_2 + e_3 + e_4) \oplus Re_5 \oplus Re_6 \leftrightarrow T_{F_1} = T_{C_1} \cup T_{B_4} \cup T_{B_7}$

\noindent $F_{2} = R(e_1 + e_2 + e_5 + e_6) \oplus Re_3 \oplus Re_4 \leftrightarrow T_{F_2} = T_{C_2} \cup T_{B_5} \cup T_{B_8}$

\noindent $F_{3} = Re_1 \oplus Re_2 \oplus R(e_3 + e_4 + e_5 + e_6) \leftrightarrow T_{F_3} = T_{C_3} \cup T_{B_6} \cup T_{B_9}$

\noindent $F_4 = R(e_1 + e_2) \oplus R(e_3 + e_4) \oplus R(e_5 + e_6) \leftrightarrow T_{F_4} = T_{C_1}\cup T_{C_2}$

\noindent $J_{1} = R(e_1 + e_2 + e_3 + e_4) \oplus R(e_5 + e_6) \leftrightarrow T_{J_1} = T_{F_1}\cup T_{F_4}$

\noindent $J_{2} = R(e_1 + e_2 + e_5 + e_6) \oplus R(e_3 + e_4) \leftrightarrow T_{J_2} = T_{F_2}\cup T_{F_4}$

\noindent $J_{3} = R(e_1 + e_2) \oplus R(e_3 + e_4 + e_5 + e_6) \leftrightarrow T_{J_3} = T_{F_3}\cup T_{F_4}$

\noindent $R \leftrightarrow T_R = S$.

Consider $U = T_{J_1}$. When seen as a groupoid, it has four connected components: $G_{\emptyset} = \{I_\emptyset \}$, $G_1 = T \setminus \{ I_{\emptyset}, I_{12}, I_{13}, I_{23} \} \simeq \{ I_1, I_2, I_3\}^2$, the coarse groupoid associated with $\{ I_1, I_2, I_3 \}$, $G_{12,13} = \{ I_{12}, I_{13}, S_{12}, S_{13} D_{12}^{13}, D_{13}^{12},$ $ P_{12}^{13}, P_{13}^{12} \} \simeq \{ I_{12}, I_{13}\}^2 \times \mathbb{Z}_2$ and $G_{23} = \{ I_{23}, S_{23} \} \simeq \mathbb{Z}_2$. Since $U$ is a full inverse subsemigroup of $S$, we have that $\beta_{U}$ is an action of $U$ on $A_{U} = A$ and $A$ is $\beta_{U}$-Galois over $(A_{U})^{\beta_{U}} = J_1$. Notice that in this case the groupoid $\max \mathbb{G}(U) = G_{12,13} \cup G_{23}$ is not connected, in contrast to Example \ref{exgeral} in where $\max \mathbb{G}(S)$ is always connected. 
 \end{exe}

\bibliographystyle{amsalpha}
{
}

\end{document}